\documentclass[11pt]{article}
\usepackage{amsmath,amsthm,epsfig,amsfonts,color}
\allowdisplaybreaks

\textwidth=16.2cm \textheight=23cm \topmargin=-0.7in
\oddsidemargin=0.cm \baselineskip=7.0mm

\def\singlespace{\def\baselinestretch{1.5}\@normalsize}

\newtheorem{theorem}{{Theorem}}
\newtheorem{lemma}{{Lemma}}
\newtheorem{remark}{{Remark}}
\renewcommand{\baselinestretch}{1.9}

\def\marginnote#1{\setbox0=\vtop{\hsize4pc
\small\raggedright\noindent\baselineskip9pt \rightskip=0.5pc plus
1.5pc #1}\leavevmode \vadjust{\dimen0=\dp0
\kern-\ht0\hbox{\kern-4.00pc\box0}\kern-\dimen0}}
\def\lboxit#1{\vbox{\hrule\hbox{\vrule\kern6pt
\vbox{\kern6pt#1\kern6pt}\kern6pt\vrule}\hrule}}

 \begin{document}
\thispagestyle{empty}
\begin{center}
{\Large \textbf{Inference for  a Special  Bilinear Time Series  Model}}
\end{center}
\vskip 10 pt \centerline{\sc  Shiqing  Ling$^1$, Liang Peng$^2$ and Fukang Zhu$^3$}
\begin{footnotetext}
{\hspace*{-0.25 in}
$^1$Department of Mathematics, Hong Kong University of Science and Technology, Hong Kong, China.\\
$^2$Department of Risk Management and Insurance, Georgia State University, USA.\\
$^3$School of Mathematics, Jilin University, Changchun 130012, China.}
\end{footnotetext}

{\bf Abstract.}
It is well known that estimating bilinear models is quite challenging. Many different ideas have been proposed to solve this problem.
However, there is not a simple way to do inference even for its simple cases. This paper studies the special bilinear model
$$Y_t=\mu+\phi Y_{t-2}+ bY_{t-2}\varepsilon_{t-1}+ \varepsilon_t,$$
where $\{\varepsilon_t\}$ is a sequence of i.i.d. random variables with mean zero.
We first give a sufficient condition for the existence of a unique stationary solution for  the model and then propose a GARCH-type maximum likelihood estimator
for estimating the unknown parameters. It is shown that the GMLE is consistent and  asymptotically normal under only  finite fourth moment of errors.
Also a simple consistent estimator for the asymptotic covariance is provided.   A simulation study confirms the good finite sample performance.
Our estimation approach is novel and nonstandard and it may provide a new insight for future research in this direction.\\
{\it Key words and phrases}: Asymptotic distribution, Bilinear model, LSE, MLE.\\
{\it AMS 2010 subject classifications}: Primary 62F12, 62M10; secondary 60G10.

\section{Introduction }
The general bilinear  time series model is defined by the equation
\begin{eqnarray}\label{eq1}
Y_t=\mu+\sum_{i=1}^p\phi_iY_{t-i}+\sum_{j=1}^q\psi_j\varepsilon_{t-j}+\sum_{l=1}^m\sum_{l'=0}^kb_{ll'} Y_{t-l}\varepsilon_{t-l'}+ \varepsilon_t,
\end{eqnarray}
where $\{\varepsilon_t\}$ is a sequence of independent and identically distributed random variables with mean zero and variance $\sigma^{2}$.
It was proposed by Granger and Anderson (1978a) and has been widely applied in many areas such as   control theory, economics and finance.
The structure of model (\ref{eq1})  has been studied in the literature especially for some special cases. For example, Subba Rao (1981) considered
model (\ref{eq1}) with $\psi_1=\cdots=\psi_q=0$; Davis and Resnick (1996) studied the asymptotic behavior  of the correlation function for the simple
bilinear model $Y_t=bY_{t-1}\varepsilon_{t-1}+\varepsilon_t$; Phan and Tran (1981), Turkman and Turkman (1997) and Basrak et al. (1999)
studied the model $Y_t=\phi_1Y_{t-1}+bY_{t-1}\varepsilon_{t-1}+\varepsilon_t$; Zhang and Tong (2001) considered the model
$Y_t=bY_{t-1}\varepsilon_t+\varepsilon_t$. A sufficient condition for stationarity of the general model was obtained by Liu and Brockwell (1988),
which is far away from the necessary one as pointed out by Liu (1989).
A simplified sufficient condition is given by Liu (1990a).

It is known that estimating the general bilinear model is quite challenging.  Many different ideas have been proposed to solve this problem for
some special cases of (\ref{eq1}), see Pham and Tran (1981), Guegan and Pham (1989), Wittwer (1989), Liu (1990b), Kim and Billard (1990), Kim et al. (1990),
Sesay and Subba Rao (1992), Gabr (1998) and Hili (2008). Extension to periodic bilinear models is studied by Bibi and Aknouche (2010) and Bibi and Gautier (2010).
 However, the asymptotic theory is either rarely established or only
derived by assuming that $\varepsilon_t$ follows a normal distribution in these papers. The Hellinger distance estimation in Hili (2008) even assumes that the density of $\varepsilon_t$ is known.
To understand this difficulty, let us look at the least squares estimator (LSE) considered by  Pham and Tran (1981). The LSE is equivalent
to the quasi-maximum likelihood estimator, which is the minimizer of
$$L_n(\theta)=\sum_{t=1}^n\varepsilon_t^2(\theta),$$
where $\theta$ is the vector  consisting of all parameters in the model and its true value is $\theta_0,\varepsilon_t(\theta_0)=\varepsilon_t$ and
$$\varepsilon_t(\theta)=Y_t-\mu-\sum_{i=1}^p\phi_iY_{t-i}-\sum_{j=1}^q\psi_j\varepsilon_{t-j}(\theta)
-\sum_{l=1}^m\sum_{l'=0}^kb_{ll'} Y_{t-l}\varepsilon_{t-l'}(\theta).$$
Given a sample $\{Y_1,\cdots, Y_n\}$, one needs  an efficient way to calculate the residual $\varepsilon_t(\theta)$ such that the effect from
the initial values  $\{Y_0, Y_{-1},\cdots\}$ is ignorable. This is the so-called invertibility of the model.
Although Liu (1990a) gave a sufficient condition for invertibility, it still remains unknown on how to use it to derive the asymptotic limit of the above LSE.
Another type of  invertibility was proposed by  Granger and Anderson (1978b).  That is, model \eqref{eq1} is said to be invertible if
$\lim\limits_{t\to\infty}E(\varepsilon_t-\hat{\varepsilon}_t)^2=0$,
where $\hat{\varepsilon}_{t}$ is an estimator of $\varepsilon_t$. Along this direction, the invertibility of a special bilinear model was studied
by Subba Rao (1981), Pham and Tran (1981) and Wittwer (1989). This type of invertibility may be useful for forecasting, but  it is not useful
for proving asymptotic normality of estimators of parameters.
This is because we need the property of $\varepsilon_t(\theta)$ at a neighborhood of the true parameter $\theta_0$ for deriving the asymptotic
limit of the estimator. For example, to obtain the asymptotic normality of the LSE, we  need the score function
$\dfrac{\partial\varepsilon_t(\theta)}{\partial\theta}$ to have a finite second moment, which in general results in some very restrictive
requirements for model (1). Let us further illustrate this issue as follows.

For the following simple bilinear model
\begin{eqnarray}\label{mod2}
Y_t=bY_{t-2}\varepsilon_{t-1}+\varepsilon_t,
\end{eqnarray}
 one needs $\prod\limits_{i=1}^mY_{t-i}$
has a finite moment for any $m$ in order to have $E\left\{\dfrac{\partial\varepsilon_t(\theta)}{\partial\theta}\right\}^2<\infty$.
Grahn (1995) showed that $EY_t^{2m}<\infty$ if and only if $b^{2m}E\varepsilon_t^{2m}<1$. Note that $E|Y_t|^m<\infty$ for any $m$ is equivalent to $b=0$
when $\varepsilon_t\sim N(0,\sigma^2)$. Thus, it is almost impossible to establish the
asymptotic normality of the LSE for model (\ref{mod2}) unless some special conditions are imposed. Instead Grahn (1995) proposed a nonstandard conditional LSE procedure for
model (\ref{mod2}) by using the facts that $E(Y_t^2|Y_s, s\le t-2)=\sigma^2+b^2\sigma^2Y_{t-2}^2$ and $E(Y_tY_{t-1}|Y_s, s\le t-2)=b\sigma^2Y_{t-2}$.
Although Grahn (1995) derived the asymptotic normality for the conditional LSE, the asymptotic variance and its estimator are not given, so some ad hoc method
such as bootstrap method is needed to construct confidence intervals for $b$.
Furthermore, the moment condition required is $EY_t^8<\infty$, which reduces to $b^8\sigma^8<1/105$ when $\varepsilon_t\sim N(0,\sigma^2)$.
This is quite restrictive on the parametric space of $(b,\sigma)$. When $\varepsilon_t\sim N(0,\sigma^2)$, Giordano (2000) and
Giordano and Vitale (2003) obtained the formula of the asymptotic variance for the conditional LSE of $b$, which can be estimated too.
Liu (1990b)  considered the LSE estimation for  the model
 \begin{eqnarray}\label{liu}
 Y_t=\phi Y_{t-p}+bY_{t-p}\varepsilon_{t-q}+\varepsilon_t,
 \end{eqnarray}
with $p\geq1$, and obtained its asymptotic normality by assuming that $\dfrac{\partial\varepsilon_t(\theta)}{\partial\theta}$ has a finite second moment.
As in model \eqref{mod2}, this condition may only hold when $b=0$ if $\varepsilon_t\sim N(0,\sigma^2)$. When $|\varepsilon_{t}|\le c$ (a constant) holds almost surely and $\phi=0$,
Liu (1990b) showed that this condition holds when $|b|\le\dfrac{1}{2c}$ which is a small parameter space when $c$ is large.
In general, one cannot check whether this condition holds when $\varepsilon_{t}$ is not bounded. That is, a general asymptotic theory for LSE or
maximum likelihood estimator (MLE) has not been established for  model \eqref{liu} up to now.

In this paper, we first give a sufficient condition for the existence of a unique stationary solution for a slightly more general model than (\ref{mod2}),
and then propose a GARCH-type MLE (GMLE) for estimating the unknown parameters. It is shown that the GMLE is consistent and asymptotically normal under only
finite fourth moment of errors. We organize this paper as follows. Section 2 presents our main results.
Section 3 reports some simulation results. Section 4 concludes. All proofs are given in Section 5.

\section{Estimation and Asymptotic Results}
Throughout we  consider the following special bilinear model:
\begin{equation}\label{mod3}
Y_t=\mu+\phi Y_{t-2}+ bY_{t-2}\varepsilon_{t-1}+ \varepsilon_t,
\end{equation}
where $\{\varepsilon_t\}$ is a sequence of independent and identically distributed random variables with mean zero and variance $\sigma^2>0$.  Let
$\ln^+x =\max\{\ln x,0\}$ be the positive part of the logarithm, and define
$$X_t=\begin{pmatrix}
Y_t\\
Y_{t-1}(\phi+b\varepsilon_t)
\end{pmatrix},~~~
A_t=\begin{pmatrix}
&0&1\\
&\phi+b\varepsilon_t&0\\
\end{pmatrix},~~~
B_t=\begin{pmatrix}
\mu+\varepsilon_t\\
0\end{pmatrix}.$$
Then (\ref{mod3}) can be rewritten as $X_t=A_tX_{t-1}+B_t$. It is easy to check that
\[\prod_{i=1}^{2m}A_i=\begin{pmatrix}
\prod\limits_{i=1}^m(\phi+b\varepsilon_{2i}) &0\\
0&\prod\limits_{i=1}^m(\phi+b\varepsilon_{2i-1})
\end{pmatrix},~~~
\prod_{i=1}^{2m+1}A_i=\begin{pmatrix}
0&\prod\limits_{i=1}^m(\phi+b\varepsilon_{2i})\\
\prod\limits_{i=1}^{m+1}(\phi+b\varepsilon_{2i-1})&0
\end{pmatrix}\]
for any integer $m\ge1$. For vector $x=(x_1,x_2)^\top$ and $2\times2$ matrix $y$, define $|x|=(x_1^2+x_2^2)^{1/2}$ and
$\|y\|=\max\limits_{|x|=1}|yx|$. Then
\[\ln\left\|\prod_{i=1}^{2m}A_i\right\|^2=\max\left\{\sum_{i=1}^m\ln(\phi+b\varepsilon_{2i})^2,
\sum_{i=1}^m\ln(\phi+b\varepsilon_{2i-1})^2\right\}\]
and
\[\ln\left\|\prod_{i=1}^{2m+1}A_i\right\|^2=\max\left\{\sum_{i=1}^m\ln(\phi+b\varepsilon_{2i})^2,
\sum_{i=1}^{m+1}\ln(\phi+b\varepsilon_{2i-1})^2\right\},\]
which imply that
\begin{equation*}\label{gamma}
\gamma=\lim_{n\to\infty}\frac 1n\ln\left\|\prod_{i=1}^nA_i\right\|=E\ln|\phi+b\varepsilon_1|.
\end{equation*}
Note that $E\ln^+|B_1|=E\ln^+|\mu+\varepsilon_1|$.
Therefore, when $E\ln^+|\mu+\varepsilon_1|<\infty$ and $E\ln|\phi+b\varepsilon_1|<0$,
it follows from Theorem 3.2.5 in Basrak (2000) that $X_n=B_n+\sum_{m=1}^{\infty}\prod_{i=0}^{m-1}A_{n-i}B_{n-m}$ converges almost surely
and is the unique strictly stationary solution of (\ref{mod3}).
Since we assume that $0<E\varepsilon_1^2<\infty$,  $E\ln^+|\mu+\varepsilon_1|<\infty$ holds naturally.

The following theorem summarizes the above arguments.
\begin{theorem}
Assume $E\ln|\phi+b\varepsilon_1|<0$. Then there exists a unique strictly stationary solution to model (\ref{mod3}), and
the solution is  ergodic and has the following representation:
$$Y_t=\mu+\varepsilon_t+\sum_{i=1}^\infty\prod_{r=0}^{i-1}(\phi+b\varepsilon_{t-2r-1})(\mu+\varepsilon_{t-2i}).$$
\end{theorem}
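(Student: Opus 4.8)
\textit{Proof proposal.} The plan is to read model (\ref{mod3}) as a two‑dimensional affine stochastic recurrence equation and invoke the standard existence/uniqueness theory for such equations; extracting the first coordinate of the resulting series then yields the stated representation. With $X_t,A_t,B_t$ as defined above one has $X_t=A_tX_{t-1}+B_t$ with $\{(A_t,B_t)\}$ i.i.d., so Theorem 3.2.5 of Basrak (2000) applies once its three hypotheses are checked: $E\ln^+\|A_1\|<\infty$, $E\ln^+|B_1|<\infty$, and the top Lyapunov exponent $\gamma$ of $\{A_t\}$ is strictly negative.

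First I would dispose of the two log‑moment conditions. Since $A_1A_1^\top$ is the diagonal matrix with entries $1$ and $(\phi+b\varepsilon_1)^2$, we have $\|A_1\|=\max\{1,|\phi+b\varepsilon_1|\}$, hence $\ln^+\|A_1\|=\ln^+|\phi+b\varepsilon_1|$; together with $|B_1|=|\mu+\varepsilon_1|$, both quantities are dominated by $\ln^+(c+|b|\,|\varepsilon_1|)$ for a suitable constant $c$, which has finite mean because $E\varepsilon_1^2<\infty$ (indeed $E|\varepsilon_1|<\infty$ already suffices). Next, the explicit forms of $\prod_{i=1}^{2m}A_i$ and $\prod_{i=1}^{2m+1}A_i$ displayed above express $\ln\|\prod_{i=1}^nA_i\|$ as a maximum of two partial sums of the i.i.d.\ variables $\ln|\phi+b\varepsilon_i|$; dividing by $n$ and applying the strong law of large numbers (legitimate, with limit in $[-\infty,\infty)$, since the positive part of $\ln|\phi+b\varepsilon_1|$ is integrable) shows that $\gamma$ is a positive multiple of $E\ln|\phi+b\varepsilon_1|$, which is negative by hypothesis. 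The cited theorem then produces the a.s.\ convergent series $X_t=B_t+\sum_{m=1}^\infty\big(\prod_{i=0}^{m-1}A_{t-i}\big)B_{t-m}$ as the unique strictly stationary solution of $X_t=A_tX_{t-1}+B_t$; in particular its first coordinate $Y_t$ is the unique strictly stationary solution of (\ref{mod3}). Ergodicity is automatic: each $X_t$, hence each $Y_t$, is a measurable function of the i.i.d.\ sequence $\{\varepsilon_s:s\le t\}$, so $\{Y_t\}$ is a stationary factor of an i.i.d.\ process and therefore ergodic.

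Finally I would identify the first coordinate of the series with the claimed formula. Because $B_{t-m}$ has second coordinate $0$, the $m$‑th summand equals $(\mu+\varepsilon_{t-m})$ times the first column of $A_tA_{t-1}\cdots A_{t-m+1}$; grouping these matrices into consecutive pairs shows the product is diagonal when $m=2i$ is even, with $(1,1)$‑entry $\prod_{r=0}^{i-1}(\phi+b\varepsilon_{t-2r-1})$, and anti‑diagonal when $m$ is odd, with $(1,1)$‑entry $0$. Hence only the even summands survive in the first coordinate, contributing $\prod_{r=0}^{i-1}(\phi+b\varepsilon_{t-2r-1})(\mu+\varepsilon_{t-2i})$ for $i\ge1$, while the $m=0$ term contributes $\mu+\varepsilon_t$; adding these gives exactly the asserted representation of $Y_t$. (As a cross‑check, the same series drops out of iterating the scalar recursion $Y_t=\mu+\varepsilon_t+(\phi+b\varepsilon_{t-1})Y_{t-2}$, the remainder term vanishing a.s.\ because the Lyapunov computation forces the product coefficients to decay geometrically while $|\mu+\varepsilon_{t-2i}|$ grows subexponentially.) The only genuinely delicate point in the whole argument is the use of the strong law for $\gamma$ in the boundary case $E\ln|\phi+b\varepsilon_1|=-\infty$; everything else is routine verification and bookkeeping with $2\times2$ matrices.
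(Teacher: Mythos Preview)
Your proposal is correct and follows essentially the same route as the paper: rewrite (\ref{mod3}) as the two-dimensional affine recursion $X_t=A_tX_{t-1}+B_t$, use the explicit product formulas for $\prod A_i$ to identify the top Lyapunov exponent with (a positive multiple of) $E\ln|\phi+b\varepsilon_1|$, and invoke Theorem~3.2.5 of Basrak (2000). If anything you are more thorough than the paper, which states the theorem as a summary of the preceding discussion and leaves the verification of $E\ln^+\|A_1\|<\infty$, the extraction of the scalar series for $Y_t$ from the vector series for $X_t$, and the ergodicity claim implicit; your explicit handling of these points, and your caution in writing ``a positive multiple of'' rather than asserting $\gamma=E\ln|\phi+b\varepsilon_1|$ exactly, are all appropriate.
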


\begin{remark}
If the model (\ref{mod3}) is irreducible, then the condition $E\ln|\phi+b\varepsilon_1|<0$ is a necessary condition for stationarity,
which is a direct consequence of Bougerol and Picard (1992, Theorem 2.5).
From Theorem 3 in Kristensen (2009) we know that a sufficient condition for irreducibility is that $\varepsilon_t$ has a continuous component at zero and  $|\phi|<1$.
\end{remark}

\begin{remark}
It follows from Jensen's inequality that $2E\ln|\phi+b\varepsilon_1|=E\ln(\phi+b\varepsilon_1)^2
<\ln E(\phi+b\varepsilon_1)^2=\ln(\phi^2+\sigma^2b^2)$ for $b\neq 0$.  Hence  model (\ref{mod3}) is still stationary when $\phi^2+\sigma^2b^2=1$ and $b\neq 0$.
\end{remark}

\begin{remark}
When $P(\phi+b\varepsilon_1>0)=1$, results in Kesten (1973) can be employed to show that $Y_t$ has a heavy tail. However, it remains unknown
on the tail behavior of $Y_t$ when $P(\phi+b\varepsilon_1>0)<1$. This is in contrast to the well-studied simple bilinear model
$Y_t=\phi Y_{t-1}+b Y_{t-1}\varepsilon_{t-1}+\varepsilon_t$  in the literature, where the tail property has been clear,
but statistical inference for parameters remains unsolved when only some moment condition on $\varepsilon_t$ is assumed.
\end{remark}

Next we estimate the unknown parameters. Let $\mathcal{F}_t$ be the $\sigma$-fields generated by $\{\varepsilon_s: s\le t\}$.
Assume that $\{Y_1, Y_2,\cdots,Y_n\}$ are generated by model (\ref{mod3}). By noting that
\begin{align*}
E[Y_t|{\cal F}_{t-2}]&=\mu+\phi Y_{t-2},\\
{\rm Var}[Y_t|{\cal F}_{t-2}]&=E[(Y_t-\mu-\phi Y_{t-2})^2|{\cal F}_{t-2}]=\sigma^2(1+b^2Y_{t-2}^2),
\end{align*}
we propose to estimate parameters by maximizing the following quasi-log-likelihood function:
\begin{eqnarray*}
&&L_n(\theta)=\sum_{t=1}^n\ell_t(\theta)~~~\mbox{and}~~~
\ell_t(\theta)=-\frac{1}{2}\left[\ln[\sigma^2(1+b^2Y_{t-2}^2)]
+\frac{(Y_t-\mu-\phi Y_{t-2})^2}{\sigma^2(1+b^2Y_{t-2}^2)}\right],
\end{eqnarray*}
where $\theta=(\mu,\phi,\sigma^2,b^2)^\top$ is the unknown parameter and its true value is denoted by $\theta_0$.
The maximizer $\hat\theta_n$ of $L_n(\theta)$ is called the GMLE of $\theta_0$.
Although the estimation idea has appeared in
Francq and Zako\"{i}n (2004), Ling (2004) and Truquet and Yao (2012), the challenge is that $\left\{\dfrac{\partial\ell_t(\theta)}{\partial\theta}\right\}$ is no longer a martingale difference,
which complicates the derivation of the asymptotic limit.
A straightforward calculation shows that
\begin{align*}
&\frac{\partial\ell_t(\theta)}{\partial\mu}=\frac{Y_t-\mu-\phi Y_{t-2}}{\sigma^2(1+b^2Y_{t-2}^2)},\\
&\frac{\partial\ell_t(\theta)}{\partial\phi}=\frac{Y_{t-2}(Y_t-\mu-\phi Y_{t-2})}{\sigma^2(1+b^2Y_{t-2}^2)},\\
&\frac{\partial\ell_t(\theta)}{\partial\sigma^2}=-\frac{1}{2\sigma^2}\left[1-\frac{(Y_t-\mu-\phi Y_{t-2})^2}{\sigma^2(1+b^2Y_{t-2}^2)}\right],\\
&\frac{ \partial\ell_t(\theta)}{\partial b^2}=-\dfrac{Y_{t-2}^2}{2(1+b^2Y_{t-2}^2)}
\left[1-\frac{(Y_t-\mu-\phi Y_{t-2})^2}{\sigma^2(1+b^2Y_{t-2}^2)}\right].
\end{align*}
By solving
\[\sum_{t=1}^n\frac{\partial\ell_t(\theta)}{\partial\mu}=\sum_{t=1}^n\frac{\partial\ell_t(\theta)}{\partial\phi}
=\sum_{t=1}^n\frac{\partial\ell_t(\theta)}{\partial\sigma^2}=0,\]
we can write the GMLE for $\mu,\phi,\sigma^2$ explicitly in terms of $b^2$. Hence, using these explicit expressions and the equation
$\sum_{t=1}^n\dfrac{\partial\ell_t(\theta)}{\partial b^2}=0$, we can first obtain the GMLE for $b^2$, and then obtain the GMLE for $\mu,\phi,\sigma^2$.

It is easy to check that  $E\left[\dfrac{\partial \ell_t(\theta_0)}{\partial\theta}\Big|{\cal F}_{t-2}\right]=0$,
but $\left\{\dfrac{\partial \ell_t(\theta_0)}{\partial\theta}\right\}_{t=1}^{\infty}$ can not be a martingale difference. Therefore we can not use the
central limit theory for martingale difference to derive the asymptotic limit. Instead
we will show that $\left\{\dfrac{\partial \ell_t(\theta_0)}{\partial\theta}\right\}_{t=1}^{\infty}$ is a near-epoch dependent sequence so that the asymptotic
limit of the proposed GMLE can be derived. Denote
\begin{align*}
\Omega&=E\left[\frac{\partial\ell_t(\theta_0)}{\partial\theta}+\frac{\partial\ell_{t-1}(\theta_0)}{\partial\theta}\right]
\left[\frac{\partial\ell_t(\theta_0)}{\partial\theta}+\frac{\partial\ell_{t-1}(\theta_0)}{\partial\theta}\right]^\top-
E\left[\frac{\partial\ell_t(\theta_0)}{\partial\theta}\frac{\partial\ell_t(\theta_0)}{\partial\theta^\top}\right],\\
\Sigma&={\rm diag}\left\{
E\left[\frac{1}{\sigma^2_0(1+b_0^2Y_{t-2}^2)}\left(\begin{array}{cc}
1&Y_{t-2}\\
Y_{t-2}&Y_{t-2}^2
\end{array}\right)\right],\,\,
E\left(\begin{array}{cc}
\dfrac{1}{2\sigma^4_0}&\dfrac{Y_{t-2}^2}{2\sigma^2_0(1+b_0^2Y_{t-2}^2)}\\
\dfrac{Y_{t-2}^2}{2\sigma^2_0(1+b_0^2Y_{t-2}^2)}&\dfrac{Y_{t-2}^4}{2(1+b_0^2Y_{t-2}^2)^2}
\end{array}\right)
\right\}.
\end{align*}
The following theorem gives the asymptotic properties of the GMLE.

\begin{theorem}
Suppose the parameter space $\Theta$ is a compact subset of $\{\theta: E\ln|\phi+b\varepsilon_{1}|<0,
|\mu|\leq\bar\mu,|\phi|\leq\bar\phi,\underline\omega\leq\sigma^2\leq\overline\omega,\underline\alpha\leq b^2\leq\overline\alpha\}$, where
$\overline\mu,\overline\phi,\underline\omega,\overline\omega,\underline\alpha$ and $\overline\alpha$ are some finite positive constants,
and the true parameter value $\theta_0$ is an interior point in $\Theta$.
Further assume $E\varepsilon_1^4<\infty$.
Then as $n\to\infty$,

(a) $\hat\theta_n\to\theta_0$ almost surely,

(b) $\sqrt n(\hat{\theta}_n-\theta_0)\overset{d}{\to}N(0,\Sigma^{-1}\Omega\Sigma^{-1})$.
\end{theorem}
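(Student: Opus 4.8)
The plan is to follow the standard route for M-estimators: a uniform law of large numbers together with an identification argument for (a), and a mean‑value expansion of the score equation combined with a central limit theorem for the score and a law of large numbers for the Hessian for (b). The one genuinely nonstandard point -- already flagged in the text -- is that $\{\partial\ell_t(\theta_0)/\partial\theta\}$ fails to be a martingale difference, so a dependent‑data CLT is needed; the matrices $\Sigma$ and $\Omega$ are designed to be exactly the Hessian limit and the long‑run variance of the score. The delicate bookkeeping throughout is to invoke only $E\varepsilon_1^4<\infty$ and never $EY_t^2<\infty$, which would fail in the boundary regime $\phi^2+\sigma^2b^2=1$ of Remark 2.

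\emph{Preliminary moment bounds.} Since $\Theta$ is compact and $E\ln|\phi+b\varepsilon_1|<0$ on $\Theta$, the function $s\mapsto E|\phi_0+b_0\varepsilon_1|^s$ is continuous, equals $1$ at $s=0$ and, using $E\varepsilon_1^4<\infty$ to differentiate under the expectation, has negative right derivative there; hence $E|\phi_0+b_0\varepsilon_1|^{s_0}<1$ for some $s_0\in(0,1]$, and the representation in Theorem 1 gives $E|Y_t|^{s_0}<\infty$. The essential structural observation is that in $\ell_t$ and its first and second derivatives $Y_{t-2}$ enters only through the bounded Lipschitz maps $y\mapsto y/(1+b^2y^2)$ and $y\mapsto y^2/(1+b^2y^2)$, while $\sigma^2(1+b^2Y_{t-2}^2)\ge\underline\omega$. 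Combining this with $E\varepsilon_1^4<\infty$ and $E|Y_t|^{s_0}<\infty$ one verifies: $E\sup_{\theta\in\Theta}|\ell_t(\theta)|<\infty$ (only the term $\ln(1+b^2Y_{t-2}^2)$ needs $E|Y_t|^{s_0}<\infty$); $E|\partial\ell_t(\theta_0)/\partial\theta|^2<\infty$, where the fourth moment is used and is essentially unavoidable because $\partial\ell_t/\partial\sigma^2$ and $\partial\ell_t/\partial b^2$ are quadratic in $\varepsilon$; and $E\sup_{\theta\in N}\big\|\partial^2\ell_t(\theta)/\partial\theta\partial\theta^\top\big\|<\infty$ on a neighbourhood $N$ of $\theta_0$.

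\emph{Part (a).} By Theorem 1 the process is strictly stationary and ergodic, so the above domination and continuity of $\theta\mapsto\ell_t(\theta)$ give, by the ergodic uniform LLN, $\sup_{\theta\in\Theta}|n^{-1}L_n(\theta)-L(\theta)|\to0$ a.s.\ with $L(\theta)=E\ell_t(\theta)$. For identification put $u_t=(\mu_0-\mu)+(\phi_0-\phi)Y_{t-2}$ and $h_t(\theta)=\sigma^2(1+b^2Y_{t-2}^2)$, both $\mathcal F_{t-2}$-measurable; since $E[(Y_t-\mu-\phi Y_{t-2})^2|\mathcal F_{t-2}]=h_t(\theta_0)+u_t^2$, taking $E[\cdot|\mathcal F_{t-2}]$ inside $\ell_t$ yields
$$L(\theta)-L(\theta_0)=-\tfrac12 E\Big[\tfrac{h_t(\theta_0)}{h_t(\theta)}-1-\ln\tfrac{h_t(\theta_0)}{h_t(\theta)}\Big]-\tfrac12 E\tfrac{u_t^2}{h_t(\theta)}\le0,$$
because $x-1-\ln x\ge0$ for $x>0$; equality forces $h_t(\theta)=h_t(\theta_0)$ and $u_t=0$ a.s., which -- since $\sigma^2>0$ makes $Y_{t-2}$, and hence $Y_{t-2}^2$, non‑degenerate -- gives $\mu=\mu_0,\phi=\phi_0,\sigma^2=\sigma_0^2,b^2=b_0^2$. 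Thus $\theta_0$ uniquely maximizes $L$, and the usual Wald argument gives $\hat\theta_n\to\theta_0$ a.s.

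\emph{Part (b).} By (a) and $\theta_0$ interior, $\partial L_n(\hat\theta_n)/\partial\theta=0$ eventually, so a coordinatewise mean‑value expansion gives $\sqrt n(\hat\theta_n-\theta_0)=-\big(n^{-1}\partial^2 L_n(\theta_n^*)/\partial\theta\partial\theta^\top\big)^{-1} n^{-1/2}\partial L_n(\theta_0)/\partial\theta$ with $\theta_n^*\to\theta_0$. The ergodic uniform LLN for the Hessian plus $\theta_n^*\to\theta_0$ give $n^{-1}\partial^2 L_n(\theta_n^*)/\partial\theta\partial\theta^\top\to-\Sigma$ a.s., where $\Sigma=-E[\partial^2\ell_t(\theta_0)/\partial\theta\partial\theta^\top]$; taking $E[\cdot|\mathcal F_{t-2}]$ first shows the mixed $(\mu,\phi)$--$(\sigma^2,b^2)$ blocks vanish (they are linear in $Y_t-\mu_0-\phi_0Y_{t-2}$) and that the diagonal blocks are those displayed, and $\Sigma$ is positive definite by the non‑degeneracy used in (a). The remaining and only real obstacle is the CLT $n^{-1/2}\sum_{t=1}^n D_t\overset{d}{\to}N(0,\Omega)$ for $D_t=\partial\ell_t(\theta_0)/\partial\theta$, where $\{D_t\}$ satisfies only $E[D_t|\mathcal F_{t-2}]=0$. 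I would use exactly this two‑step structure: with $a_t=D_t-E[D_t|\mathcal F_{t-1}]$ and $b_t=E[D_{t+1}|\mathcal F_t]$, both are square‑integrable and $\mathcal F_t$-measurable, $D_t=a_t+b_{t-1}$, and $\{a_t+b_t,\mathcal F_t\}$ is a stationary ergodic martingale difference sequence (indeed $E[a_t+b_t|\mathcal F_{t-1}]=E[D_{t+1}|\mathcal F_{t-1}]=0$); hence $\sum_{t=1}^n D_t=\sum_{t=1}^n(a_t+b_t)+b_0-b_n$ with $n^{-1/2}(b_0-b_n)\overset{p}{\to}0$, and Billingsley's CLT for stationary ergodic martingale differences gives asymptotic normality with covariance $\lim_n n^{-1}{\rm Var}\big(\sum_{t=1}^n D_t\big)$, which -- as $E[D_tD_s^\top]=0$ for $|t-s|\ge2$ -- equals $E[D_tD_t^\top]+E[D_tD_{t-1}^\top]+E[D_{t-1}D_t^\top]$, i.e.\ $\Omega$. (Equivalently one may check, via the geometric decay of $E|Y_t-Y_t^{(m)}|^{s_0}$ and the boundedness and Lipschitz continuity of $y\mapsto y^j/(1+b^2y^2)$, that $\{D_t\}$ is $L^2$ near‑epoch dependent on $\{\varepsilon_t\}$ with geometric coefficients, and invoke a CLT for such sequences.) Slutsky's theorem then delivers $\sqrt n(\hat\theta_n-\theta_0)\overset{d}{\to}N(0,\Sigma^{-1}\Omega\Sigma^{-1})$. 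The main difficulty is precisely this last step -- supplying the dependent‑data CLT in place of the missing martingale‑difference property -- with the moment bookkeeping that keeps $E\varepsilon_1^4<\infty$ sufficient a close second.
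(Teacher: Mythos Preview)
Your proposal is correct and, for part (a) and the moment bookkeeping, tracks the paper's own argument (its Lemmas 1--2 plus an ergodic uniform LLN) closely.

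For the CLT in part (b) you take a genuinely different and more elementary route than the paper. The paper (its Lemma 3) shows that the score is $L^2$ near-epoch dependent on $\{\varepsilon_t\}$ with geometric coefficients---via the truncation $Y_{m,t}$, the bound $E|Y_t-Y_{m,t}|^\delta=O(\rho^m)$, and the Lipschitz property of $y\mapsto y^j/(1+b_0^2y^2)$---and then invokes Theorem 21.1 of Billingsley (1968). You instead exploit directly the exact two-step structure $E[D_t\mid\mathcal F_{t-2}]=0$: the coboundary decomposition $D_t=a_t+b_{t-1}$ with $a_t=D_t-E[D_t\mid\mathcal F_{t-1}]$ and $b_t=E[D_{t+1}\mid\mathcal F_t]$ yields a stationary ergodic martingale difference $\{a_t+b_t,\mathcal F_t\}$, so the ordinary MDS CLT applies and the long-run variance collapses to $E[D_tD_t^\top]+E[D_tD_{t-1}^\top]+E[D_{t-1}D_t^\top]=\Omega$. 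This is shorter and needs no NED machinery; the paper's approach, in contrast, is more portable---it would carry over with little change if the model involved $Y_{t-k}$ for $k>2$, where your one-step coboundary would have to be lengthened. You also note the NED alternative in parentheses, which is precisely what the paper does.
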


\begin{remark}
To ensure the positive definiteness of $\Sigma$  in Theorem 2,  we only need to show  the two sub-matrices are positive definite, which is equivalent to show the determinants of these two sub-matrices are positive.
Obviously  Cauchy-Schwarz inequality implies that the determinant of the second sub-matrix in $\Sigma$ is positive.
Put $A=1+b_0^2Y_{t}^2$, then the determinant of the first sub-matrix is
$\sigma_0^{-4}\{E(A^{-1})E(1-A^{-1})b_0^{-2} -[E(Y_tA^{-1})]^2\}
=\sigma_0^{-4}b_0^{-2}\{E(A^{-1})-[E(A^{-1})]^2-b_0^2[E(Y_tA^{-1})]^2\}
>\sigma_0^{-4}b_0^{-2}\{E(A^{-1})-E(A^{-2})-E(b_0^2Y_t^2A^{-2})\}=0.$
\end{remark}

\begin{figure}[h]
\centering
\includegraphics[scale=0.55]{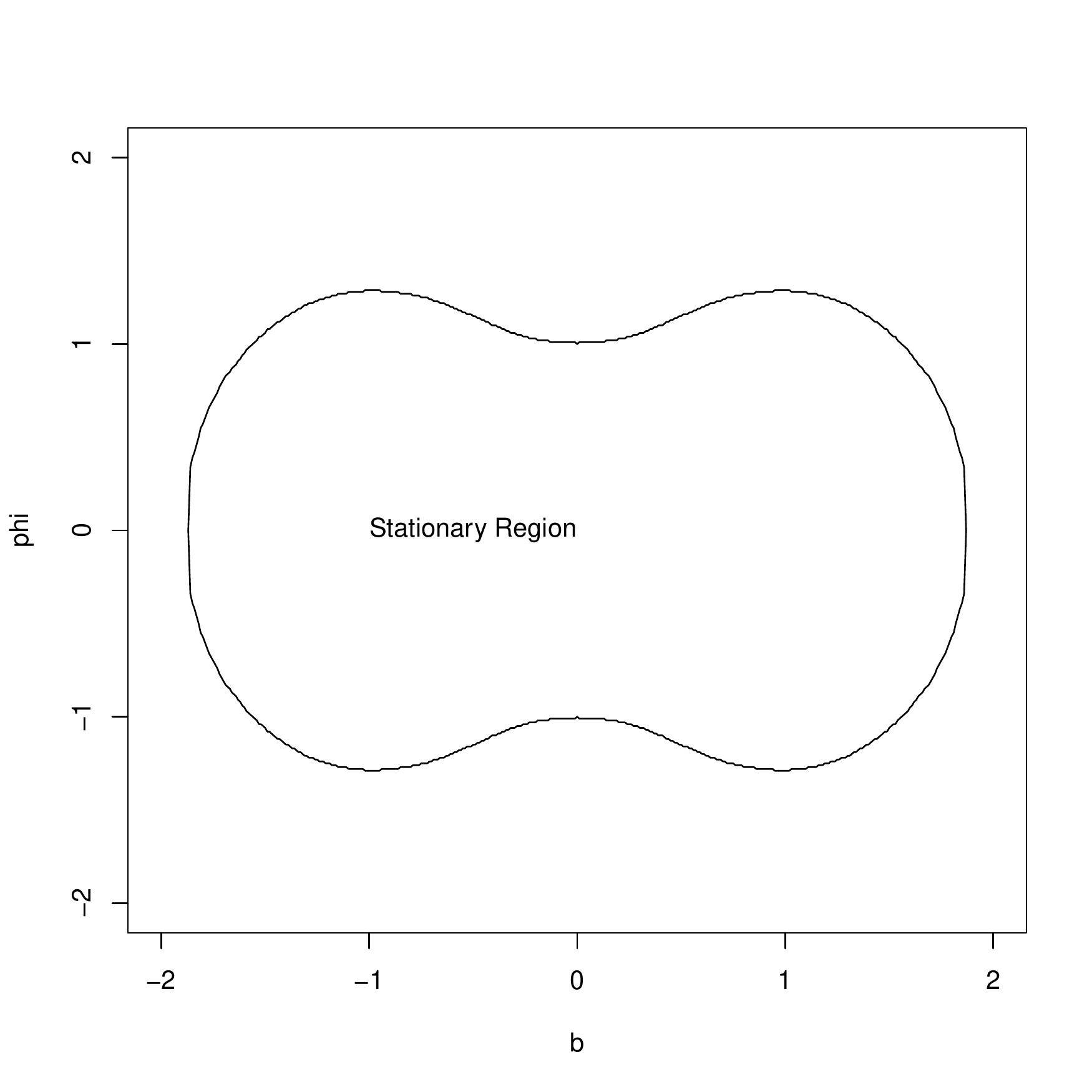}
\caption{Region of $(b,\phi)$ such that $E\ln|\phi+ b\varepsilon_1|<0$ when $\varepsilon_1\sim N(0,1)$.}

\end{figure}
\begin{remark}
Figure 1 gives the region of $(b, \phi)$ such that $E\ln|\phi+b\varepsilon_1|<0$ when $\varepsilon_1\sim N(0,1)$.
So $|b|$ can be greater than 1, i.e., the asymptotic limit of the proposed  GMLE holds under some weaker conditions than the
condition $|b\sigma|<105^{-1/8}\approx 0.5589$ required by the conditional LSE in Grahn(1995).
Moreover,
$\Omega$ and $\Sigma$ can be estimated simply by
\begin{eqnarray*}
\hat\Omega_n&=&\frac{1}{n}\sum_{t=1}^n\left[\frac{\partial l_t(\hat{\theta}_n)}
{\partial\theta}+\frac{\partial l_{t-1}(\hat{\theta}_n)}{\partial\theta}\right]
\left[\frac{\partial l_t(\hat{\theta}_n)}{\partial\theta}+\frac{\partial l_{t-1}(\hat{\theta}_n)}{\partial\theta}\right]^\top-
\frac{1}{n}\sum_{t=1}^n\left[\frac{\partial l_t(\hat{\theta}_n)}{\partial\theta}\frac{\partial l_t(\hat{\theta}_n)}{\partial\theta^\top}\right],\\
\hat{\Sigma}_{n}&=&{\rm diag}\left\{
\frac{1}{n}\sum_{t=1}^n\left[\frac{1}{\hat\theta_{n3}(1+\hat \theta_{n4}Y_{t-2}^2)}\left(\begin{array}{cc}
1&Y_{t-2}\\
Y_{t-2}&Y_{t-2}^2
\end{array}\right)\right]\right.,\\
&&~~~~~~~~\left.\frac{1}{n}\sum_{t=1}^{n}\left(\begin{array}{cc}
\dfrac{1}{2\hat\theta^2_{n3}}&\dfrac{Y_{t-2}^2}{2\hat\theta_{n3}(1+\hat\theta_{n4}Y_{t-2}^2)}\\
\dfrac{Y_{t-2}^2}{2\hat\theta_{n3}(1+\hat\theta_{n4}Y_{t-2}^2)}&\dfrac{Y_{t-2}^4}{2(1+\hat\theta_{n4}Y_{t-2}^2)^2}
\end{array}\right)\right\},
\end{eqnarray*}
respectively, where $\hat\theta_n=(\hat\theta_{n1},\hat\theta_{n2},\hat\theta_{n3},\hat\theta_{n4})^\top$.
The consistency follows from  Lemma 2 in Section 5.
\end{remark}

Since $\sum\limits_{t=1}^n\dfrac{\partial\ell_t(\theta)}{\partial b^2}=0$ is equivalent to
$\sum\limits_{t=1}^n\dfrac{\partial\ell_t(\theta)}{\partial b}=0$, one can not estimate $b$ by the above GMLE.
In order to estimate $b$, we need a consistent estimator for the sign of $b$. Write
\[ (Y_t-\mu-\phi Y_{t-2})(Y_{t-1}-\mu-\phi Y_{t-3})=\varepsilon_t(bY_{t-3}\varepsilon_{t-2}+\varepsilon_{t-1})
+b^2Y_{t-3}Y_{t-2}\varepsilon_{t-2}\varepsilon_{t-1}+bY_{t-2}\varepsilon_{t-1}^2 .\]
It is easy to see that
$E\{(Y_t-\mu-\phi Y_{t-2})(Y_{t-1}-\mu-\phi Y_{t-3})|{\cal F}_{t-2}\}=b\sigma^2Y_{t-2}$,
which motivates to estimate $b$ by minimizing the following least squares
\[\sum_{t=2}^n\{(Y_t-\mu-\phi Y_{t-2})(Y_{t-1}-\mu-\phi Y_{t-3})-b\sigma^2Y_{t-2}\}^2\]
with  $\mu, \phi $ and $\sigma^2$ being replaced by the corresponding GMLE. However,  in order to avoid requiring some moment conditions on $Y_t$, we propose to minimize the
weighted least squares
\[\sum_{t=2}^n\frac{\{(Y_t-\mu-\phi Y_{t-2})(Y_{t-1}-\mu-\phi Y_{t-3})-b\sigma^2Y_{t-2}\}^2}{(1+Y_{t-2}^2)\sqrt{1+Y_{t-3}^2}}\]
with $\mu, \phi, \sigma^2$ being replaced by the corresponding GMLE.
This results in
\[\tilde b_n=\left(\hat\theta_{n3}\sum_{t=2}^n\dfrac{Y_{t-2}^2}{(1+Y_{t-2}^2)\sqrt{1+Y_{t-3}^2}}\right)^{-1}
\sum_{t=2}^n\dfrac{(Y_t-\hat\theta_{n1}-\hat\theta_{n2}Y_{t-2})(Y_{t-1}-\hat\theta_{n1}-\hat\theta_{n2}Y_{t-3})Y_{t-2}}{
(1+Y_{t-2}^2)\sqrt{1+Y_{t-3}^2}}.\]
Like Theorem 2 (a), it is easy to show that $\tilde b_n=b+o_{p}(1)$.
Using $\tilde b_n$ to estimate the sign of $b$, we obtain an estimator for $b$ as $\hat b_n={\rm sgn}(\tilde b_n)\sqrt{\hat\theta_{n4}}$.
It easily follows from Theorem 2 that $\hat b_n=b+o_{p}(1)$ and
the asymptotic limit of $2b\sqrt n(\hat b_n-b)$ is the same as that of $\sqrt n(\hat\theta_{n4}-b^{2})$ given in Theorem 2. As stated in the simulation study, we propose to use $2b\sqrt n(\hat b_n-b)$ rather than $2\hat b_n\sqrt n(\hat b_n-b)$
to construct a confidence interval for $b$ although both share the same asymptotic limit.
Moreover we do not propose to estimate $b$ directly by $\tilde b_n$. The reason is that like Grahn (1995) we can not derive the formula and a
consistent estimator for the asymptotic variance of $\sqrt n(\tilde b_n-b)$. Moreover, $\tilde b_n$ is a less efficient estimator than $\hat b_n$ in general.

 Theorem 2 excludes the case of $b=0$, which reduces the bilinear model to a linear model. Hence testing $H_0: b=0$ is of interest.
Write $\Theta=[-\bar\mu, \bar\mu]\times [-\bar\phi, \bar \phi]\times [\underline\omega, \overline\omega]\times[0, \overline\alpha]$, where $\overline\mu,\overline\phi,\underline\omega,\overline\omega$ and $\overline\alpha$ are some finite positive constants. Then the case of $b=0$ means that
 $\theta=(\mu,\phi,\sigma^2,b^2)^\top$ lies at the boundary of the compact set $\Theta$, which implies that the case of $b=0$ is the well-known nonstandard situation of maximum likelihood estimation. The following theorem easily follows from Lemmas 1--3 in Section 5 and the same arguments in deriving (2.2) in Self and Liang (1987).

\begin{theorem}
Suppose the parameter space $\Theta$  satisfies $E\ln|\phi+b\varepsilon_1|<0$, and
 the true parameter value $\theta_0=(\mu_0,\phi_0,\sigma_0^2,0)^\top$ satisfies that $(\mu_0,\phi_0,\sigma_0^2)^\top$ is  an interior point of $[-\bar\mu, \bar\mu]\times [-\bar\phi, \bar \phi]\times [\underline\omega, \overline\omega]$.
Further assume $E\varepsilon_1^4<\infty$.
Then as $n\to\infty$,

(a) $\hat\theta_n\rightarrow\theta_0$ almost surely,

(b) $\sqrt n(\hat{\theta}_n-\theta_0)\overset{d}{\to}(Z_1,Z_2,Z_3,Z_4)^\top I(Z_4>0)
+(Z_1-\sigma_{14}\sigma_{44}^{-1}Z_4, Z_2-\sigma_{24}\sigma_{44}^{-1}Z_4,Z_3-\sigma_{34}\sigma_{44}^{-1}Z_4,0)^\top I(Z_4<0),$
where $(Z_1,Z_2,Z_3,Z_4)^\top\sim N(0, \Sigma^{-1}\Omega\Sigma^{-1})$, $\Sigma^{-1}\Omega\Sigma^{-1}=(\sigma_{ij})$ and $\Sigma$ and $\Omega$ are given in Theorem 2.
\end{theorem}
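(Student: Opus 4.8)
For part~(a) I expect no new idea beyond the proof of Theorem~2(a): that argument uses only compactness of $\Theta$, the ergodicity supplied by Theorem~1, and the identification of $\theta_0$ as the unique maximizer over $\Theta$ of $E\ell_t(\theta)$, none of which requires $\theta_0$ to be interior. At $\theta_0=(\mu_0,\phi_0,\sigma_0^2,0)$ the process degenerates to $Y_t=\mu_0+\phi_0Y_{t-2}+\varepsilon_t$, but $\theta_0$ remains the unique maximizer of $E\ell_t(\theta)$: conditioning on $\mathcal F_{t-2}$, an information inequality gives $E\ell_t(\theta)\le E\ell_t(\theta_0)$ with equality only if $\sigma^2(1+b^2Y_{t-2}^2)=\sigma_0^2$ and $\mu+\phi Y_{t-2}=\mu_0+\phi_0Y_{t-2}$ almost surely, which forces $b=0$, $\sigma^2=\sigma_0^2$, $\mu=\mu_0$ and $\phi=\phi_0$ because $Y_{t-2}$ is non-degenerate (its variance is $\sigma_0^2/(1-\phi_0^2)>0$). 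Hence the consistency statement among Lemmas~1--3 applies verbatim, giving $\hat\theta_n\to\theta_0$ almost surely.

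For part~(b) the plan is to localize at $\theta_0$. Writing $u=\sqrt n(\theta-\theta_0)$, the rescaled parameter set $\sqrt n(\Theta-\theta_0)$ increases to the closed half-space $\Lambda=\{u\in\mathbb R^4:u_4\ge0\}$, because $(\mu_0,\phi_0,\sigma_0^2)$ is interior and only the constraint $b^2\ge0$ is active at $\theta_0$. A second-order Taylor expansion of $L_n$ along the segment from $\theta_0$ to $\theta_0+u/\sqrt n$, together with the near-epoch-dependence central limit theorem for the score (giving $n^{-1/2}\sum_{t=1}^n\partial\ell_t(\theta_0)/\partial\theta\overset{d}{\to}W\sim N(0,\Omega)$) and the uniform law of large numbers for $n^{-1}\partial^2 L_n/\partial\theta\partial\theta^\top$ in a neighbourhood of $\theta_0$, both furnished by Lemmas~1--3, yields
$$L_n\!\left(\theta_0+\tfrac{u}{\sqrt n}\right)-L_n(\theta_0)=u^\top W_n-\tfrac12u^\top\Sigma u+r_n(u),\qquad W_n\overset{d}{\to}W,$$
with $\sup_{|u|\le M}|r_n(u)|=o_p(1)$ for each fixed $M$. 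Next I would verify that $\Sigma$ is positive definite at $\theta_0$: when $b_0=0$ its two diagonal $2\times2$ blocks have strictly positive determinants (positive multiples of ${\rm Var}(Y_{t-2})$ and ${\rm Var}(Y_{t-2}^2)$, both finite since $Y_t$ then has a finite fourth moment), hence are positive definite; this replaces the determinant computation in Remark~3, which assumed $b_0\ne0$. Positive definiteness of $\Sigma$ gives the tightness bound $\sqrt n(\hat\theta_n-\theta_0)=O_p(1)$ via $0\le L_n(\hat\theta_n)-L_n(\theta_0)$, so the maximization can be restricted to a large fixed ball, and the argmax continuous-mapping argument underlying (2.2) in Self and Liang (1987) gives $\sqrt n(\hat\theta_n-\theta_0)\overset{d}{\to}\hat u$, where $\hat u$ maximizes $u^\top W-\tfrac12u^\top\Sigma u$ over $\Lambda$.

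It then remains to identify $\hat u$. The unconstrained maximizer of $u\mapsto u^\top W-\tfrac12u^\top\Sigma u$ is $Z:=\Sigma^{-1}W\sim N(0,\Sigma^{-1}\Omega\Sigma^{-1})$, whose coordinates are the $(Z_1,Z_2,Z_3,Z_4)^\top$ of the statement. On the event $\{Z_4\ge0\}$ one has $Z\in\Lambda$, so $\hat u=Z$. On $\{Z_4<0\}$ strict concavity places the maximizer on the face $\{u_4=0\}$, where the objective reduces to $v\mapsto v^\top W_{(3)}-\tfrac12 v^\top\Sigma_{(3)}v$ over $v\in\mathbb R^3$, with $W_{(3)}$ the first three components of $W$ and $\Sigma_{(3)}$ the leading $3\times3$ submatrix of $\Sigma$; its maximizer $\Sigma_{(3)}^{-1}W_{(3)}$, re-expressed through $Z=\Sigma^{-1}W$ and the partitioned-inverse identity, equals $(Z_1-\sigma_{14}\sigma_{44}^{-1}Z_4,\ Z_2-\sigma_{24}\sigma_{44}^{-1}Z_4,\ Z_3-\sigma_{34}\sigma_{44}^{-1}Z_4,\ 0)^\top$. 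Since $P(Z_4=0)=0$, combining the two branches with the indicators $I(Z_4>0)$ and $I(Z_4<0)$ gives part~(b).

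\textbf{Main obstacle.} The boundary projection and the Self--Liang argmax step are routine once the local quadratic approximation is in place; the real work is entirely inside Lemmas~1--3: (i) the uniform quadratic expansion with negligible remainder under only $E\varepsilon_1^4<\infty$, which should go through because the weights $1/(1+b^2Y_{t-2}^2)$ and $Y_{t-2}^2/(1+b^2Y_{t-2}^2)$ and their $\theta$-derivatives are bounded functions of $Y_{t-2}$, so the second and third derivatives of $\ell_t$ are dominated by integrable functions of $\varepsilon_t$ alone, regardless of the tails of $Y_t$; and (ii) the central limit theorem for $\{\partial\ell_t(\theta_0)/\partial\theta\}$, which (as stressed in the text) fails to be a martingale difference and must be obtained from its near-epoch dependence on the i.i.d.\ innovation sequence. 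With those available, Theorem~3 follows by the short deduction above.
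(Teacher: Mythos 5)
Your overall route is the one the paper intends: its proof of Theorem 3 is a two-line reduction, observing that at $b_0=0$ the bound \eqref{add1} must be obtained directly from \eqref{4.5} (the earlier derivation divided by $|b_0|$), then ``repeating Lemmas 1--3'' and invoking the argument behind (2.2) of Self and Liang (1987); your consistency argument, local quadratic expansion, NED central limit theorem for the score, tightness via positive definiteness of $\Sigma$ at $b_0=0$, and the argmax over the half-space $\{u_4\ge 0\}$ flesh out exactly that program. Two small caveats on the ``repeat Lemmas 1--3'' part: with $b^2$ now allowed to equal $0$, the weights $Y_{t-2}^2/(1+b^2Y_{t-2}^2)$ are \emph{not} bounded uniformly over $\Theta$, so the domination arguments of Lemmas 1--2 must instead use that the true process is linear at $b_0=0$ and hence $EY_t^4<\infty$ under $E\varepsilon_1^4<\infty$ and $|\phi_0|<1$; and in Lemma 3 the step that divides by $|b_0|$ must be replaced as the paper indicates. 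Both are routine, and your positive-definiteness check of $\Sigma$ at $b_0=0$ is correct.

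The genuine gap is in your last identification step. Your own derivation gives, on $\{Z_4<0\}$, the maximizer of $u^\top W-\tfrac12 u^\top\Sigma u$ on the face $\{u_4=0\}$, namely $\Sigma_{(3)}^{-1}W_{(3)}$; writing $W=\Sigma Z$ and using the partitioned inverse, this equals
\begin{equation*}
\Bigl(Z_1-\tfrac{(\Sigma^{-1})_{14}}{(\Sigma^{-1})_{44}}Z_4,\;Z_2-\tfrac{(\Sigma^{-1})_{24}}{(\Sigma^{-1})_{44}}Z_4,\;Z_3-\tfrac{(\Sigma^{-1})_{34}}{(\Sigma^{-1})_{44}}Z_4,\;0\Bigr)^\top,
\end{equation*}
i.e.\ the $\Sigma$-metric projection of $Z$, with coefficients taken from $\Sigma^{-1}$. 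The statement you are asked to prove instead uses $\sigma_{i4}\sigma_{44}^{-1}$ with $(\sigma_{ij})=\Sigma^{-1}\Omega\Sigma^{-1}={\rm Cov}(Z)$, and these two sets of coefficients coincide only when the fourth column of $\Sigma^{-1}\Omega\Sigma^{-1}$ is proportional to that of $\Sigma^{-1}$ --- automatic in the classical Self--Liang setting where $\Omega=\Sigma$, but not in this sandwich setting. Concretely, at $b_0=0$ one computes $\Sigma$ block-diagonal in $(\mu,\phi)$ versus $(\sigma^2,b^2)$, so $(\Sigma^{-1})_{14}=(\Sigma^{-1})_{24}=0$, whereas $\sigma_{14},\sigma_{24}$ involve $E\varepsilon_1^3$ and vanish only for symmetric-type errors; when $E\varepsilon_1^3=0$ the lower blocks of $\Omega$ and $\Sigma$ are proportional and the two formulas agree, but for skewed errors they do not. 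So ``the partitioned-inverse identity'' does not deliver the stated expression: you must either prove the required column proportionality under the theorem's assumptions or acknowledge that your (correctly derived) limit is the $\Sigma$-projection form and reconcile it with the displayed formula; as written, the final equality is asserted rather than established, and it is exactly the point where the failure of the information-matrix equality ($\Omega\neq\Sigma$) matters.
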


\begin{remark}
Using the consistent estimators for $\Omega$ and $\Sigma$ in Remark 5, one can easily simulate  the asymptotic limit of $\sqrt n(\hat\theta_n-\theta_0)$ so that interval estimation is obtained.
For testing $H_0: b=0$ against $H_a: b\neq 0$, we
let $\hat\sigma_{ij}$ denote the consistent estimator for $\sigma_{ij}$ given in Remark 5, but with $\hat\theta_{n,4}$ being replaced by 0.
By Theorem 3 one rejects $H_0$ at level $\xi$  whenever $\hat\theta_{n,4}>\sqrt{\hat\sigma_{44}}z_{\xi}/\sqrt n$, where $P(N(0,1)>z_{\xi})=\xi$.
We also remark that the likelihood ratio tests in Self and Liang (1987) do not apply to our bilinear model even for the case of $b^2>0$.
The reason is that $\Big\{\dfrac{\partial \ell_t(\theta)}{\partial\theta}\Big\}$ can not be a martingale difference,
and so $\Omega$ in Theorem 2 is different from the standard one
$E\Big\{\dfrac{\partial \ell_t(\theta)}{\partial\theta}\dfrac{\partial \ell_t(\theta)}{\partial\theta^\top}\Big\}$, which is necessary to ensure Wilks theorem holds for the likelihood ratio approach.
\end{remark}

\section{Simulation}
We investigate the finite sample performance of the proposed GMLE by drawing 1,000 random samples of size $n=200$ and 1,000 from model (\ref{mod3}) with
$\mu=0,b=\pm0.1$ or $\pm1, \phi=0$ or 0.9, and $\varepsilon_t\sim N(0,1)$. We compute the GMLE
$\hat\theta_n=(\hat\theta_{n1},\cdots,\hat\theta_{n4})^\top$ for $\theta=(\mu,\phi,\sigma^2, b^2)^\top$ and $\hat b_n$. For an estimator $\hat\beta$,
we use $E(\hat\beta)$, SD$(\hat\beta)$ and $\widehat{\rm SD}(\hat\beta)$ to denote the sample mean of $\hat\beta$, sample standard deviation of
$\hat\beta$ and sample mean of the standard deviation estimator given in Remark 5 of $\hat\beta$ based on the 1,000 samples.

Tables 1 and 2 report these quantities, which show that the proposed GMLE has a small bias (i.e, $E(\cdot)$ close to the true value) and
the proposed variance estimator is accurate too (i.e., $\widehat{\rm SD}(\cdot)$ close to SD$(\cdot)$). From these two tables, we  also observe that
SD$(\hat\beta)$ and $\widehat{\rm SD}(\hat\beta)$ are much smaller when $n=1000$ than those when $n=200$.
Although the proposed estimator
for $b$ has a small bias, the proposed variance estimator performs badly when $b$ is small. This is due to some very small values of $\hat\theta_{n4}$.
However, the variance estimator for $2b\hat b_n$ is reasonably well and much accurate than that for $\hat b_n$. Hence, we suggest to use $2b\sqrt n(\hat b_n -b)$ instead of
$\sqrt n(\hat b_n -b)$ to construct a confidence interval for $b$ in practice.

Next we use Remark 6 to test $H_0: b=0$ against $H_a: b\neq 0$ at levels 0.1 and 0.05. We draw 10,000 random samples of size $n=200$ and $1,000$ from model (\ref{mod3})
with $\mu=0, b=b^*/n^{0.25}, \phi=0.1$ or 0.9, $b^*=0, 0.5, 1$, and $\varepsilon_t\sim N(0,1)$.  The empirical size and power are reported in Table 3, where $b^*=0$ corresponds to the size.
From Table 3, we observe that the proposed test has a reasonably accurate size and non-trivial power.

\begin{table}
\label{T1}\caption{Sample mean and sample standard deviation are reported for the proposed GMLE for $(\mu,\phi,\sigma^2,b^2)^\top$
and $b$ with $n=200$.} \vspace{0.1in}
\begin{center}
\begin{tabular}{ccccccccc}
\hline
$(b, \phi)$& (0.1, 0) &(0.1, 0.9) &(-0.1, 0) &(-0.1, 0.9) &(1, 0) &(1, 0.9) &(-1, 0) &(-1, 0.9)\\
\hline
$E(\hat\theta_{n1})$&0.0008&-0.0020&0.0002&0.0021&-0.0014&0.0054&0.0039&-0.0009\\
SD$(\hat\theta_{n1})$&0.0707&0.0890&0.0720&0.0882&0.1058&0.1315&0.1015&0.1369\\
$\widehat{\rm SD}(\hat\theta_{n1})$&0.0701&0.0740&0.0703&0.0740&0.0999&0.1244&0.0997&0.1241\\
\hline
$E(\hat\theta_{n2})$&-0.0026&0.8793&-0.0092&0.8797&-0.0113&0.8887&-0.0052&0.8892\\
SD$(\hat\theta_{n2})$&0.0714&0.0399&0.0706&0.0400&0.1000&0.0878&0.0979&0.0891\\
$\widehat{\rm SD}(\hat\theta_{n2})$&0.0698&0.0352&0.0693&0.0350&0.0977&0.0868&0.0974&0.0869\\
\hline
$E(\hat\theta_{n3})$&0.9648&0.9896&0.9744&0.9919&1.0209&1.0254&1.0263&1.0280\\
SD$(\hat\theta_{n3})$&0.1115&0.1161&0.1047&0.1179&0.1976&0.2535&0.1993&0.2524\\
$\widehat{\rm SD}(\hat\theta_{n3})$&0.1196&0.1253&0.1209&0.1259&0.1855&0.2245&0.1880&0.2280\\
\hline
$E(\hat\theta_{n4})$&0.0355&0.0124&0.0344&0.0117&0.9987&1.0256&0.9882&1.0201\\
SD$(\hat\theta_{n4})$&0.0615&0.0166&0.0570&0.0168&0.3183&0.3205&0.3246&0.3041\\
$\widehat{\rm SD}(\hat\theta_{n4})$&0.0738&0.0174&0.0728&0.0172&0.2786&0.2667&0.2797&0.2671\\
\hline
$E(\hat b_n)$&0.0879&0.0780&-0.0986&-0.0736&0.9872&1.0015&-0.9812&-0.9996\\
SD$(\hat b_n)$&0.1666&0.0793&0.1572&0.0793&0.1555&0.1505&0.1593&0.1446\\
$\widehat{\rm SD}(\hat b_n)$&4.7444&0.8028&4.5939&0.8511&0.1379&0.1294&0.1393&0.1303\\
\hline
$E(2b\hat b_n)$&0.0176&0.0156&0.0197&0.0147&1.9744&2.0030&1.9624&1.9992\\
SD$(2b\hat b_n)$&0.0333&0.0159&0.0314&0.0159&0.3111&0.3010&0.3187&0.2892\\
$\widehat{\rm SD}(2b\hat b_n)$&0.0738&0.0174&0.0728&0.0172&0.2786&0.2667&0.2797&0.2671\\
\hline
\end{tabular}
\end{center}
\end{table}
\begin{table}
  \label{T2}\caption{Sample mean and sample standard deviation are reported for the proposed GMLE for $(\mu,\phi,\sigma^2,b^2)^\top$
  and $b$ with $n=1000$.} \vspace{0.1in}
\begin{center}
\begin{tabular}{ccccccccc}
\hline
$(b, \phi)$& (0.1, 0) &(0.1, 0.9) &(-0.1, 0) &(-0.1, 0.9) &(1, 0) &(1, 0.9) &(-1, 0) &(-1, 0.9)\\
\hline
$E(\hat\theta_{n1})$&0.0002&-0.0004&0.0001&0.0007&-0.0005&0.0031&0.0002&-0.0001\\
SD$(\hat\theta_{n1})$&0.0311&0.0330&0.0310&0.0331&0.0444&0.0563&0.0445&0.0556\\
$\widehat{\rm SD}(\hat\theta_{n1})$&0.0317&0.0325&0.0317&0.0325&0.0448&0.0549&0.0447&0.0549\\
\hline
$E(\hat\theta_{n2})$&-0.0044&0.8956&-0.0041&0.8958&-0.0036&0.8971&-0.0026&0.8972\\
SD$(\hat\theta_{n2})$&0.0306&0.0155&0.0311&0.0156&0.0428&0.0382&0.0434&0.0385\\
$\widehat{\rm SD}(\hat\theta_{n2})$&0.0317&0.0153&0.0376&0.0153&0.0444&0.0392&0.0444&0.0392\\
\hline
$E(\hat\theta_{n3})$&0.9907&1.0017&0.9904&1.0005&1.0037&1.0014&1.0024&1.0102\\
SD$(\hat\theta_{n3})$&0.0492&0.0555&0.0492&0.0555&0.0829&0.1033&0.0844&0.1042\\
$\widehat{\rm SD}(\hat\theta_{n3})$&0.0550&0.0564&0.0550&0.0563&0.0846&0.1019&0.0849&0.1029\\
\hline
$E(\hat\theta_{n4})$&0.0178&0.0095&0.0182&0.0097&0.9969&1.0050&0.9975&0.9949\\
SD$(\hat\theta_{n4})$&0.0247&0.0070&0.0251&0.0070&0.1365&0.1192&0.1358&0.1214\\
$\widehat{\rm SD}(\hat\theta_{n4})$&0.0327&0.0072&0.0329&0.0072&0.1284&0.1194&0.1298&0.1185\\
\hline
$E(\hat b_n)$&0.0935&0.0871&-0.0935&-0.0887&0.9961&1.0008&-0.9964&-0.9956\\
SD$(\hat b_n)$&0.0953&0.0434&0.0974&0.0432&0.0680&0.0589&0.0678&0.0606\\
$\widehat{\rm SD}(\hat b_n)$&1.8724&0.1336&1.9108&0.1349&0.0642&0.0594&0.0649&0.0593\\
\hline
$E(2b\hat b_n)$&0.0187&0.0174&0.0187&0.0177&1.9923&2.0015&1.9929&1.9912\\
SD$(2b\hat b_n)$&0.0191&0.0087&0.0195&0.0086&0.1359&0.1178&0.1356&0.1212\\
$\widehat{\rm SD}(2b\hat b_n)$&0.0327&0.0072&0.0329&0.0072&0.1284&0.1194&0.1298&0.1185\\
\hline
\end{tabular}
\end{center}
\end{table}

\begin{table}
  \label{T3}\caption{Empirical size and power are reported for the proposed test in Remark 6 for testing $H_0: b=0$ against $H_a: b\neq 0$ at levels 0.1 and 0.05.} \vspace{0.1in}
\begin{center}
\begin{tabular}{cccccccc}
\hline
&\multicolumn{3}{c}{level 0.1}&&\multicolumn{3}{c}{level 0.05}\\\cline{2-4}\cline{6-8}
$(n, \phi)$& $b=0$ &$b=0.5n^{-0.25}$ &$b=n^{-0.25}$ &&$b=0$ &$b=0.5n^{-0.25}$ &$b=n^{-0.25}$\\
\hline
(200, 0.1)&0.0790&0.1142&0.2513&&0.0449&0.0634&0.1550\\
\hline
(200, 0.9)&0.0777&0.3151&0.7575&&0.0422&0.2025&0.6267\\
\hline
(1000, 0.1)&0.0815&0.1237&0.3030&&0.0375&0.0592&0.1795\\
\hline
(1000, 0.9)&0.0758&0.4026&0.9604&&0.0336&0.2619&0.9052\\
\hline
\end{tabular}
\end{center}
\end{table}

\section{Conclusions}
Many different ideas have been proposed for estimating parameters in bilinear models. Unfortunately asymptotic limit is either missing or derived under some restrictive  distribution assumption on errors. By focusing  on a simple bilinear model,
we first give a sufficient condition for the existence of a unique stationary solution for the model and then propose a GARCH-type maximum likelihood estimator
for estimating parameters. It is shown that the proposed estimator is consistent and asymptotically normal under mild conditions. Moreover, the new estimation approach is novel, nonstandard and  has  good finite sample behavior.

\section{Proofs}
We first give one lemma, which plays a key role in the proofs of other lemmas.

\begin{lemma}
Under assumptions of Theorem 2,
\begin{eqnarray*}
(a)&&E\sup_{\theta\in\Theta}| \ell_t( \theta)|<\infty;\\
(b)&& E\ell_t(\theta)~\mbox{achieves its unique maximum value at}~\theta=\theta_{0}.
\end{eqnarray*}
\end{lemma}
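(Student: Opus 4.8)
The plan is to treat parts (a) and (b) separately, (b) resting on a conditional Kullback--Leibler computation.

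For (a), I would split $\ell_t(\theta)=-\tfrac12\ln[\sigma^2(1+b^2Y_{t-2}^2)]-\tfrac12(Y_t-\mu-\phi Y_{t-2})^2/[\sigma^2(1+b^2Y_{t-2}^2)]$ and bound the supremum of each piece over the compact set $\Theta$ by an integrable random variable. Since $\underline\omega\le\sigma^2\le\overline\omega$ and $b^2\le\overline\alpha$ on $\Theta$, the logarithmic term satisfies the deterministic bound $\sup_{\theta\in\Theta}|\ln[\sigma^2(1+b^2Y_{t-2}^2)]|\le C+2\ln^+|Y_{t-2}|$ for a constant $C$, and its expectation is finite as soon as $E\ln^+|Y_t|<\infty$. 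This last fact is the quantitative counterpart of the almost sure convergence argument preceding Theorem 1: the stationary solution of an affine stochastic recurrence equation $X_t=A_tX_{t-1}+B_t$ with negative top Lyapunov exponent $\gamma=E\ln|\phi_0+b_0\varepsilon_1|<0$ and $E\ln^+\|A_1\|+E\ln^+|B_1|<\infty$ has a finite logarithmic moment, and these hypotheses all hold here (the two $\ln^+$ conditions because $E\varepsilon_1^2<\infty$). For the quadratic term I would substitute $Y_t-\mu-\phi Y_{t-2}=\varepsilon_t+b_0Y_{t-2}\varepsilon_{t-1}+(\mu_0-\mu)+(\phi_0-\phi)Y_{t-2}$, expand the square into four pieces, divide by $\sigma^2(1+b^2Y_{t-2}^2)\ge\underline\omega(1+\underline\alpha Y_{t-2}^2)$, and use $Y_{t-2}^2/(1+\underline\alpha Y_{t-2}^2)\le\underline\alpha^{-1}$; this yields a bound that is uniform in $\theta\in\Theta$ and whose expectation is finite using only $E\varepsilon_1^2<\infty$. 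Summing the two bounds gives (a).

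For (b), write $f_{t-2}(\theta)=\sigma^2(1+b^2Y_{t-2}^2)$ and $g_{t-2}(\theta)=\mu+\phi Y_{t-2}$, so $\ell_t(\theta)=-\tfrac12\ln f_{t-2}(\theta)-\tfrac12(Y_t-g_{t-2}(\theta))^2/f_{t-2}(\theta)$. Using $E[Y_t\mid\mathcal{F}_{t-2}]=g_{t-2}(\theta_0)$ and ${\rm Var}(Y_t\mid\mathcal{F}_{t-2})=f_{t-2}(\theta_0)$, a direct computation gives
\begin{equation*}
E\bigl[\ell_t(\theta)-\ell_t(\theta_0)\mid\mathcal{F}_{t-2}\bigr]=\tfrac12\bigl(\ln u-u+1\bigr)-\frac{\bigl(g_{t-2}(\theta_0)-g_{t-2}(\theta)\bigr)^2}{2f_{t-2}(\theta)},\qquad u:=\frac{f_{t-2}(\theta_0)}{f_{t-2}(\theta)}>0 .
\end{equation*}
Because $\ln u-u+1\le0$ for every $u>0$, with equality if and only if $u=1$, the right-hand side is $\le 0$ almost surely; taking expectations (legitimate by (a)) gives $E\ell_t(\theta)\le E\ell_t(\theta_0)$. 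For uniqueness, equality in $E\ell_t(\theta)\le E\ell_t(\theta_0)$ forces the conditional expectation above to vanish almost surely, hence $f_{t-2}(\theta)=f_{t-2}(\theta_0)$ and $g_{t-2}(\theta)=g_{t-2}(\theta_0)$ almost surely, i.e.
\begin{equation*}
(\mu-\mu_0)+(\phi-\phi_0)Y_{t-2}=0\quad\text{and}\quad(\sigma^2-\sigma_0^2)+(\sigma^2b^2-\sigma_0^2b_0^2)Y_{t-2}^2=0\quad\text{a.s.}
\end{equation*}
Since ${\rm Var}(Y_t\mid\mathcal{F}_{t-2})=\sigma_0^2(1+b_0^2Y_{t-2}^2)\ge\sigma_0^2>0$, the random variable $Y_t$, hence $Y_{t-2}$, is non-degenerate, so the first identity forces $\phi=\phi_0$ and then $\mu=\mu_0$; and a short argument using $b_0^2\ge\underline\alpha>0$ together with the non-degeneracy of $\varepsilon_t$ rules out $Y_{t-2}^2$ being almost surely constant, so the second identity forces $\sigma^2=\sigma_0^2$ and then $b^2=b_0^2$. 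Hence $\theta_0$ is the unique maximizer.

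The genuinely non-routine ingredient is $E\ln^+|Y_t|<\infty$, needed for the logarithmic term in (a): it does not follow merely from $E\ln|\phi+b\varepsilon_1|<0$ and the almost sure convergence of the series representation in Theorem 1, but from the sharper moment statement in the stochastic recurrence equation literature. Everything else is routine: the quadratic term in (a) needs only a second moment, and (b) is the familiar conditional Kullback--Leibler argument, the one mild care point there being the non-degeneracy of $Y_{t-2}^2$ in the identifiability step.
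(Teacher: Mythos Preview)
Your proposal is correct and mirrors the paper's overall structure. The one substantive difference is in how you control the logarithmic term in (a): you bound it by $C+2\ln^+|Y_{t-2}|$ and invoke general SRE theory for $E\ln^+|Y_t|<\infty$, whereas the paper proves the stronger fact $E|Y_t|^\delta<\infty$ for some $\delta\in(0,1)$. It does so by first showing $E|\phi_0+b_0\varepsilon_1|^\delta<1$ (a consequence of $E\ln|\phi_0+b_0\varepsilon_1|<0$ and $E|\varepsilon_1|<\infty$, as in Ling (2004)), substituting into the series representation of Theorem~1, and then bounding $E\ln(1+\overline\alpha Y_{t-2}^2)$ via Jensen by $\delta_0^{-1}\ln(1+\overline\alpha^{\delta_0}E|Y_{t-2}|^{2\delta_0})$ with $\delta_0=\delta/2$. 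Your route is adequate for this lemma in isolation; the paper's is more self-contained, and the auxiliary inequality $E|\phi_0+b_0\varepsilon_1|^\delta<1$ is reused later in the near-epoch dependence proof (Lemma~3). The quadratic piece of (a) and all of (b) are treated essentially identically in both proofs: the same substitution $Y_t-\mu-\phi Y_{t-2}=\varepsilon_t+b_0Y_{t-2}\varepsilon_{t-1}-(\mu-\mu_0)-(\phi-\phi_0)Y_{t-2}$ for the quadratic term, and the same conditional Kullback--Leibler computation for (b), the paper deferring to Ling (2004) for the step where $M_t=1$ a.s.\ forces $(\sigma^2,b^2)=(\sigma_0^2,b_0^2)$, which is exactly your non-degeneracy-of-$Y_{t-2}^2$ argument.
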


\begin{proof} Since $E|\varepsilon_t|<\infty$, similar to the proof of Lemma 1 in Ling (2004), we can show that there exists a $\delta\in(0,1)$
such that $E|\phi+b\varepsilon_t|^\delta<1$. Using this and the expression of $Y_t$ in Theorem 1, we can show that  $E|Y_t|^\delta<\infty$.
Take $\delta_0=\delta/2$. Thus, by Jensen's inequality, it follows that
\begin{align*}
E\sup_{\theta\in\Theta}|\ln[\sigma^2(1+b^{2}Y_{t-2}^2)]|
&\le\sup_{\theta\in\Theta}|\ln\sigma^2|+\delta^{-1}_0E\ln(1+\overline\alpha Y_{t-2}^2)^{\delta_0}\\
&\le|\ln\overline\omega|+\delta^{-1}_0\ln(1+\overline\alpha^{\delta_0}E|Y_{t-2}|^\delta)<\infty,
\end{align*}
where the following elementary relationship is used: $(a_1+a_2)^s\le a_1^s+a_2^s$ for all $a_1,a_2>0$ and $s\in[0,1]$.
Furthermore, since $Y_t-\mu-\phi Y_{t-2}=\varepsilon_t-(\mu-\mu_0)-(\phi-\phi_0)Y_{t-2}+b_0\varepsilon_{t-1}Y_{t-2}$, it can be shown that
\begin{align}\label{4.1}
&E\sup_{\theta\in\Theta}\frac{(Y_t-\mu-\phi Y_{t-2})^2}{\sigma^2(1+b^2Y_{t-2}^2)}\nonumber\\
&~~~\le\frac{4}{\underline\omega}\left[E\sup_{\theta\in\Theta}\frac{\varepsilon_t^2}{1+b^2Y_{t-2}^2}
+E\sup_{\theta\in\Theta}\frac{(\mu-\mu_0)^2}{1+b^2Y_{t-2}^2}+E\sup_{\theta\in\Theta}\frac{(\phi-\phi_{0})^2Y_{t-2}^2}{1+b^2Y_{t-2}^2}
+E\sup_{\theta\in\Theta}\frac{b_0^2\varepsilon_{t-1}^2Y_{t-2}^2}{1+b^2Y_{t-2}^2}\right]\nonumber\\
&~~~\le\frac{4}{\underline\omega}\left(\overline\omega+4\overline\mu^2+\frac{4\overline\phi^2}{\underline\alpha}
+\frac{\overline\omega~\overline\alpha}{\underline\alpha}\right)
<\infty.
\end{align}
Hence, (a) holds.

To prove (b), by noting that
\begin{align*}
E[(Y_t-\mu-\phi Y_{t-2})^2|{\cal F}_{t-2}]
&=E[(\varepsilon_t-(\mu-\mu_0)-(\phi-\phi_{0})Y_{t-2}+b_0\varepsilon_{t-1}Y_{t-2})^{2}|{\cal F}_{t-2}]\\
&=[(\mu-\mu_0)+(\phi-\phi_0)Y_{t-2}]^2+\sigma^2_0(1+b_0^2Y_{t-2}^2),
\end{align*}
we have
\begin{align}\label{4.2}
E\ell_t(\theta)&=-\frac{1}{2}E\left[\ln[\sigma^2(1+b^2Y_{t-2}^2)]+\frac{(Y_{t}-\mu-\phi Y_{t-2})^2}{\sigma^2(1+b^2Y_{t-2}^2)}\right]\nonumber\\
&=-\frac{1}{2}\left\{E\ln[\sigma^2(1+b^2Y_{t-2}^2)]+E\frac{\sigma^2_0(1+b_0^2Y_{t-2}^2)}{\sigma^2(1+b^2Y_{t-2}^2)}\right\}
-E\frac{[(\mu-\mu_0)+(\phi-\phi_0)Y_{t-2}]^2}{2\sigma^2(1+b^2Y_{t-2}^2)}.
\end{align}
The second term in \eqref{4.2} reaches its maximum at zero, and  this occurs if and only if $\mu=\mu_0$ and $\phi=\phi_0$.
The first term in \eqref{4.2} is equal to
\begin{equation}\label{4.3}
-\dfrac{1}{2}[-E(\ln M_t)+EM_t]-\dfrac{1}{2}E\ln[\sigma^2_0(1+b_0^2Y_{t-2}^2)],
\end{equation}
where $M_t=\dfrac{\sigma^2_0(1+b_0^2Y_{t-2}^2)}{\sigma^2(1+b^2Y_{t-2}^2)}$. As in Lemma 2 of Ling (2004), \eqref{4.3} reaches its maximum
$-1/2-E\ln[\sigma^2_0(1+b_0^2Y_{t-2}^2)]/2$, and this occurs if and only if $\sigma^2=\sigma_0^2$ and $b^2=b_0^2$.
Thus, $E\ell_t(\theta)$ is uniquely maximized at $\theta_0$.  \end{proof}

\begin{lemma}
Under assumptions of Theorem 2,
\begin{eqnarray*}
(a)&& E\sup_{\theta\in\Theta}\left\|\frac{\partial\ell_t(\theta)}{\partial\theta}\right\|^2<\infty,\\
(b)&& E\sup_{\theta\in \Theta}\left[\frac{\partial^2\ell_t(\theta)}{\partial \theta\partial\theta^\top}\right]<\infty,\\
(c)&&\sup_{\theta\in\Theta}\left|\frac{1}{n}\sum_{t=1}^n\left\{\frac{\partial^2\ell_t(\theta)}{\partial \theta\partial\theta^\top}
-E\frac{\partial^2\ell_t(\theta)}{\partial \theta\partial\theta^\top}\right\}\right|=o_p(1)\quad\text{as}\quad n\to\infty,\\
(d)&&\sup_{\theta\in\Theta}\left|\frac 1{n}\sum_{t=1}^n\left\{\left\|\frac{\partial\ell_t(\theta)}{\partial\theta}\right\|^2-E\left\|\frac{\partial\ell_t(\theta)}{\partial\theta}\right\|^2\right\}\right|=o_p(1)\quad\text{as}\quad n\to\infty.
\end{eqnarray*}
\end{lemma}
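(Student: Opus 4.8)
The plan is to reduce (a) and (b) to one elementary estimate on a single type of summand, and to derive (c) and (d) from a uniform ergodic theorem. Write $u_{t}:=Y_{t}-\mu-\phi Y_{t-2}=\varepsilon_{t}-(\mu-\mu_{0})-(\phi-\phi_{0})Y_{t-2}+b_{0}\varepsilon_{t-1}Y_{t-2}$ as in the proof of Lemma~1, and note that on $\Theta$ we have $b^{2}\ge\underline\alpha>0$, $\sigma^{2}\ge\underline\omega>0$, $1+b^{2}Y_{t-2}^{2}\ge1$, and that $u_{t}$ depends only on $\mu,\phi$ (and the fixed $\mu_{0},\phi_{0},b_{0}$), while $b^{2}$ enters the derivative formulas only through denominators $(1+b^{2}Y_{t-2}^{2})^{k}$. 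Hence, after substituting the displayed expression for $u_{t}$ and multiplying out, each entry of $\partial^{2}\ell_{t}(\theta)/\partial\theta\partial\theta^{\top}$ and each squared partial derivative $(\partial\ell_{t}(\theta)/\partial\theta_{i})^{2}$, hence also $\|\partial\ell_{t}(\theta)/\partial\theta\|^{2}$, are finite sums of terms of the shape
$$\bigl(\text{a factor bounded uniformly on }\Theta\bigr)\;\varepsilon_{t}^{\,p}\,\varepsilon_{t-1}^{\,s}\;\frac{Y_{t-2}^{\,m}}{(1+b^{2}Y_{t-2}^{2})^{k}},$$
where one checks from the explicit formulas that always $m\le 2k$, that $p,s\le 4$ for the squared score, and that $p,s\le 2$ for the Hessian (e.g.\ the $\sigma^{2}$- and $b^{2}$-derivatives carry the factor $1-u_{t}^{2}/[\sigma^{2}(1+b^{2}Y_{t-2}^{2})]$, which after squaring produces $u_{t}^{4}$, whence $p,s\le4$). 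Because $m\le2k$, the map $b^{2}\mapsto Y_{t-2}^{m}/(1+b^{2}Y_{t-2}^{2})^{k}$ is bounded on $[\underline\alpha,\overline\alpha]$ by a finite constant depending only on $\underline\alpha,m,k$; and because $\varepsilon_{t},\varepsilon_{t-1}$ are independent, $E|\varepsilon_{t}|^{p}|\varepsilon_{t-1}|^{s}=E|\varepsilon_{1}|^{p}\,E|\varepsilon_{1}|^{s}<\infty$ for $p,s\le4$ under $E\varepsilon_{1}^{4}<\infty$. Therefore the expectation of the supremum over $\Theta$ of each such term is finite, and summing the finitely many of them yields (a); for the $\mu$- and $\phi$-components this estimate reduces to the bound \eqref{4.1}.

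For (b) the identical decomposition applies with $p,s\le2$, so only $E\varepsilon_{1}^{2}<\infty$ — which is assumed — is needed, giving $E\sup_{\theta\in\Theta}\|\partial^{2}\ell_{t}(\theta)/\partial\theta\partial\theta^{\top}\|<\infty$.

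For (c) and (d), recall from Theorem~1 that $\{Y_{t}\}$ is strictly stationary and ergodic, so $\{(Y_{t},Y_{t-2})\}$ is too, and hence, since $\partial^{2}\ell_{t}(\theta)/\partial\theta\partial\theta^{\top}$ and $\|\partial\ell_{t}(\theta)/\partial\theta\|^{2}$ are fixed measurable functions of $(Y_{t},Y_{t-2})$ and $\theta$, for each fixed $\theta$ they form stationary ergodic sequences. Since $\sigma^{2}\ge\underline\omega>0$ and $1+b^{2}Y_{t-2}^{2}>0$, they are almost surely continuous in $\theta$ on the compact set $\Theta$, and by (a)--(b) they have integrable envelopes. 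The claims then follow from a standard uniform ergodic theorem: given $\epsilon>0$, continuity together with dominated convergence (legitimate by the envelope) furnishes, for each $\theta\in\Theta$, a radius $\rho(\theta)>0$ with $E\sup_{\|\vartheta-\theta\|\le\rho(\theta)}|g_{t}(\vartheta)-g_{t}(\theta)|<\epsilon$, where $g_{t}$ denotes either of the two summands; by compactness finitely many such balls cover $\Theta$; applying the ordinary ergodic theorem at the finitely many centres and to the finitely many oscillation functions and then interpolating over each ball gives $\sup_{\theta\in\Theta}|n^{-1}\sum_{t=1}^{n}g_{t}(\theta)-Eg_{1}(\theta)|\to0$ almost surely, in particular $o_{p}(1)$. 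This is (c) and (d).

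The step I expect to be the main obstacle is (a), specifically its $\sigma^{2}$- and $b^{2}$-components: one must expand $u_{t}^{4}$ — conveniently via $u_{t}^{4}\le C\bigl(\varepsilon_{t}^{4}+(\mu-\mu_{0})^{4}+(\phi-\phi_{0})^{4}Y_{t-2}^{4}+b_{0}^{4}\varepsilon_{t-1}^{4}Y_{t-2}^{4}\bigr)$ — and check term by term that, once $Y_{t-2}$-powers are cancelled against the denominator, only finite \emph{fourth} moments of $\varepsilon_{t}$ are needed, which is exactly the hypothesis of Theorem~2 and nothing stronger. The bookkeeping is routine, but it rests entirely on $\underline\alpha>0$, i.e.\ on $b^{2}$ being bounded away from zero on $\Theta$: this is what makes every ratio $Y_{t-2}^{m}/(1+b^{2}Y_{t-2}^{2})^{k}$ with $m\le2k$ bounded. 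Without it one would be forced to control moments of $Y_{t}$ itself, which by Grahn's characterization ($EY_{t}^{2m}<\infty$ iff $b^{2m}E\varepsilon_{t}^{2m}<1$) need not exist, and the argument breaks down.
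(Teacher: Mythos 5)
Your proposal is correct and follows essentially the same route as the paper: parts (a) and (b) are proved by the same termwise bounding, using $\sigma^2\ge\underline\omega$, $b^2\ge\underline\alpha$ and compactness of $\Theta$ to make every ratio $Y_{t-2}^m/(1+b^2Y_{t-2}^2)^k$ (with $m\le 2k$) uniformly bounded so that only $E\varepsilon_1^4<\infty$ is needed, exactly as in the displayed bounds for $\partial\ell_t/\partial\phi$ and $\partial\ell_t/\partial b^2$. The only difference is that for (c) and (d) the paper simply invokes the uniform law of large numbers of Ling and McAleer (2003, Theorem 3.1), whereas you prove that ULLN directly by the standard compactness/oscillation/ergodic-theorem argument, which is the same substance.
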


\begin{proof}
As in \eqref{4.1},  it is easy to show that
\begin{align*}
E\sup_{\theta\in\Theta}\left[\frac{\partial\ell_t(\theta)}{\partial\phi}\right]^2&=
E\sup_{\theta\in\Theta}\frac{Y_{t-2}^2[\varepsilon_{t}-(\mu-\mu_0)
-(\phi-\phi_0)Y_{t-2}+b_0\varepsilon_{t-1}Y_{t-2}]^2}{\sigma^4(1+b^2Y_{t-2}^2)^2}\\
&\le\frac{4}{\underline\omega^2}\left[E\sup_{\theta\in\Theta}\frac{\varepsilon_t^2Y_{t-2}^2}{(1+b^2Y_{t-2}^2)^2}
+E\sup_{\theta\in\Theta}\frac{(\mu-\mu_0)^2Y_{t-2}^2}{(1+b^2Y_{t-2}^2)^2}\right.\\
&~~~\left.+E\sup_{\theta\in\Theta}\frac{(\phi-\phi_{0})^2Y_{t-2}^4}{(1+b^2Y_{t-2}^2)^2}
+E\sup_{\theta\in\Theta}\frac{b_0^2\varepsilon_{t-1}^2Y_{t-2}^4}{(1+b^2Y_{t-2}^2)^2}\right]<\infty,\\
E\sup_{\theta\in\Theta}\left[\frac{\partial\ell_t(\theta)}{\partial b^{2}}\right]^2&
\le\frac{1}{2}E\sup_{\theta\in\Theta}\frac{Y_{t-2}^4}{(1+b^2Y_{t-2}^2)^2}+\frac{1}{2}E\sup_{\theta\in\Theta}
\frac{Y_{t-2}^4(Y_t-\mu-\phi Y_{t-2})^4}{\sigma^4(1+b^2Y_{t-2}^2)^4}\\
&=\frac{1}{2}E\sup_{\theta\in\Theta}\frac{Y_{t-2}^4}{(1+b^2Y_{t-2}^2)^2}\\
&~~~+\frac{1}{2}E\sup_{\theta\in\Theta}\frac{Y_{t-2}^4[\varepsilon_{t}-(\mu-\mu_0)
-(\phi-\phi_0)Y_{t-2}+b_0\varepsilon_{t-1}Y_{t-2}]^4}{\sigma^4(1+b^2Y_{t-2}^2)^4}\\
&\le\frac{1}{2\underline\alpha^2}+
2E\sup_{\theta\in\Theta}\frac{Y_{t-2}^4[\varepsilon_{t}^2+(\mu-\mu_0)^2
+(\phi-\phi_0)^2Y_{t-2}^2+b_0^2\varepsilon_{t-1}^2Y_{t-2}^2]^2}{\sigma^4(1+b^2Y_{t-2}^2)^4}\\
&\le\frac{1}{2\underline\alpha^2}+\frac{8}{\underline\omega^2}
\left[E\sup_{\theta\in\Theta}\frac{\varepsilon_t^4Y_{t-2}^4}{(1+b^2Y_{t-2}^2)^4}
+E\sup_{\theta\in\Theta}\frac{(\mu-\mu_0)^4Y_{t-2}^4}{(1+b^2Y_{t-2}^2)^4}\right.\\
&~~~\left.+E\sup_{\theta\in\Theta}\frac{(\phi-\phi_{0})^4Y_{t-2}^8}{(1+b^2Y_{t-2}^2)^4}
+E\sup_{\theta\in\Theta}\frac{b_0^4\varepsilon_{t-1}^4Y_{t-2}^8}{(1+b^2Y_{t-2}^2)^4}\right]<\infty.
\end{align*}
Similarly, we can show that  other  terms   in (a) are finite too. Hence, (a) holds.

A straightforward calculation gives that
\begin{align*}
\frac{\partial^2\ell_t(\theta)}{\partial\mu^2}&=-\frac{1}{\sigma^2(1+b^2Y_{t-2}^2)},\\
\frac{\partial^2\ell_t(\theta)}{\partial\phi^2}&=-\frac{ Y_{t-2}^2}{\sigma^2(1+b^2Y_{t-2}^2)},\\
\frac{\partial^2\ell_t(\theta)}{\partial\sigma^4}&=\frac{1}{2\sigma^4}\left[1-\frac{2(Y_t-\mu-\phi Y_{t-2})^2}{\sigma^2(1+b^2Y_{t-2}^2)}\right],\\
\frac{\partial^2\ell_t(\theta)}{\partial b^4}&=
\dfrac{Y_{t-2}^4}{2(1+b^2Y_{t-2}^2)^2}\left[1-\frac{2(Y_t-\mu-\phi Y_{t-2})^2}{\sigma^2(1+b^2Y_{t-2}^2)}\right],\\
\frac{\partial^2\ell_t(\theta)}{\partial\mu\partial\phi}&=-\frac{Y_{t-2}}{\sigma^2(1+b^2Y_{t-2}^2)},\\
\frac{\partial^2\ell_t(\theta)}{\partial\mu\partial\sigma^2}&=-\frac{Y_t-\mu-\phi Y_{t-2}}{\sigma^4(1+b^2Y_{t-2}^2)},\\
\frac{\partial^2\ell_t(\theta)}{\partial\mu\partial b^2}&=-\frac{Y_{t-2}^2(Y_t-\mu-\phi Y_{t-2})}{\sigma^2(1+b^2Y_{t-2}^2)^2},\\
\frac{\partial^2\ell_t(\theta)}{\partial\phi\partial\sigma^2}&=-\frac{Y_{t-2}(Y_t-\mu-\phi Y_{t-2})}{\sigma^4(1+b^2Y_{t-2}^2)},\\
\frac{\partial^2\ell_t(\theta)}{\partial\phi\partial b^2}&=-\frac{Y_{t-2}^3(Y_t-\mu-\phi Y_{t-2})}{\sigma^2(1+b^2Y_{t-2}^2)^2},\\
\frac{\partial^2\ell_t(\theta)}{\partial\sigma^2\partial b^2}&=-\frac{Y_{t-2}^2(Y_t-\mu-\phi Y_{t-2})^2}{2\sigma^4(1+b^2Y_{t-2}^2)^2}.
\end{align*}
Using these formulas and some   similar  arguments  in proving (a), we can show that (b) holds.  (c) and (d) follow from  Theorem 3.1 in Ling and McAleer (2003). \end{proof}

\begin{lemma}
Under assumptions of Theorem 2,
$$\frac{1}{\sqrt n}\sum_{t=1}^n\frac{\partial\ell_t(\theta_0)}{\partial\theta}\overset{d}{\to}N(0,\Omega).$$
\end{lemma}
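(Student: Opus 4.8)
The plan is to exploit the lag-$2$ structure of the score. As observed just before Theorem~2, $\xi_t:=\partial\ell_t(\theta_0)/\partial\theta$ is $\mathcal{F}_t$-measurable with $E[\xi_t\mid\mathcal{F}_{t-2}]=0$; moreover $E\|\xi_t\|^2<\infty$ by Lemma~2(a), and $\{\xi_t\}$ is stationary and ergodic because $\xi_t$ is a fixed time-invariant measurable function of $Y_t$ and $Y_{t-2}$ and $\{Y_t\}$ is stationary and ergodic by Theorem~1. The first thing I would record is that the autocovariances $\Gamma_h:=E[\xi_t\xi_{t-h}^\top]$ vanish for $|h|\ge2$: for $h\ge2$, $\xi_{t-h}$ is $\mathcal{F}_{t-2}$-measurable, so $\Gamma_h=E\bigl[E[\xi_t\mid\mathcal{F}_{t-2}]\,\xi_{t-h}^\top\bigr]=0$, and $\Gamma_h=\Gamma_{-h}^\top$ handles $h\le-2$. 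Expanding the definition of $\Omega$ in Theorem~2 gives $\Omega=\Gamma_0+\Gamma_1+\Gamma_{-1}$, so $\Omega=\sum_h\Gamma_h$ is exactly the long-run variance of $\{\xi_t\}$, and in particular $n^{-1}\mathrm{Var}\bigl(\sum_{t=1}^n\xi_t\bigr)\to\Omega$.

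To obtain the limit law I would pass to a Gordin-type martingale approximation, which for this lag-$2$ sequence is finite and exact. Put $g_t:=E[\xi_{t+1}\mid\mathcal{F}_t]$ and $h_t:=\bigl(\xi_t-E[\xi_t\mid\mathcal{F}_{t-1}]\bigr)+g_t$. Then $g_t$ is $\mathcal{F}_t$-measurable with $E\|g_t\|^2\le E\|\xi_{t+1}\|^2<\infty$, and $E[g_t\mid\mathcal{F}_{t-1}]=E[\xi_{t+1}\mid\mathcal{F}_{t-1}]=0$ again by the lag-$2$ property; since $\xi_t-E[\xi_t\mid\mathcal{F}_{t-1}]$ is an $\mathcal{F}_t$-martingale difference, $\{h_t,\mathcal{F}_t\}$ is a stationary ergodic vector martingale-difference sequence with finite second moment. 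A short computation using $E[\xi_t\mid\mathcal{F}_{t-1}]=g_{t-1}$ and $E[g_{t-1}\xi_{t+1}^\top]=0$ shows $E[h_th_t^\top]=\Gamma_0+\Gamma_1+\Gamma_{-1}=\Omega$. Telescoping gives $\sum_{t=1}^n h_t=\sum_{t=1}^n\xi_t+g_n-g_0$ with $g_n,g_0$ identically distributed and square-integrable, so $(g_n-g_0)/\sqrt n\overset{p}{\to}0$; applying the martingale central limit theorem (with the Cram\'er--Wold device for the vector statement) to $n^{-1/2}\sum_{t=1}^n h_t$ then yields $n^{-1/2}\sum_{t=1}^n\xi_t\overset{d}{\to}N(0,\Omega)$.

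Equivalently, and in line with the near-epoch dependence remark made before Theorem~2, one can verify directly that $\{\xi_t\}$ is $L_2$-NED on the i.i.d.\ sequence $\{\varepsilon_t\}$ and invoke a central limit theorem for NED functions of (here, trivially) mixing processes, the limiting variance again being $\sum_h\Gamma_h=\Omega$. For this one truncates the moving-average series for $Y_{t-2}$ in Theorem~1 after $m$ terms --- call it $Y_{t-2,m}$ --- obtaining an approximant $\xi_{t,m}$ that is a function of $\varepsilon_t,\varepsilon_{t-1},\dots,\varepsilon_{t-2m-2}$ only, and bounds $\|\xi_t-\xi_{t,m}\|_2$. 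The point requiring care --- and what I expect to be the main obstacle on this route --- is that $Y_{t-2}$ carries only a fractional moment $E|Y_{t-2}|^\delta<\infty$ (with $\delta\in(0,1)$ as in Lemma~1), so $\|Y_{t-2}-Y_{t-2,m}\|_2$ cannot be controlled directly; the way around it is that $Y_{t-2}$ enters $\xi_t$ only through quantities of the form $Y_{t-2}^k/(1+b_0^2Y_{t-2}^2)^j$ with $k\le2j\le4$, each bounded and Lipschitz in $Y_{t-2}$, so its increment is $O\bigl(\min\{1,|Y_{t-2}-Y_{t-2,m}|\}\bigr)=O\bigl(|Y_{t-2}-Y_{t-2,m}|^{\delta}\bigr)$ in $L_2$, whereupon independence of $\varepsilon_t,\varepsilon_{t-1}$ from $Y_{t-2}$, $E\varepsilon_1^4<\infty$, and the geometric decay of $E|Y_{t-2}-Y_{t-2,m}|^{\delta}$ (which follows from $E|\phi+b\varepsilon_1|^\delta<1$) give $\|\xi_t-\xi_{t,m}\|_2\le C\rho^m$ for some $\rho\in(0,1)$, i.e.\ NED of size $-\infty$. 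On either route, the autocovariance computation of the first paragraph is what forces the limiting variance to equal $\Omega$ rather than the naive $\Gamma_0$; everything else is moment bookkeeping already carried out in Lemmas~1--2 together with an appeal to a standard martingale (or NED) CLT.
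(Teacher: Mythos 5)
Your proposal is correct, and your primary argument takes a genuinely different route from the paper. The paper proves this lemma by the Cram\'er--Wold device plus a near-epoch-dependence argument: it truncates the moving-average representation of $Y_{t-2}$ from Theorem 1, uses the fractional moment $E|\phi_0+b_0\varepsilon_1|^\delta<1$ and the boundedness of the functions $Y^k/(1+b_0^2Y^2)^j$ appearing in the score to show $E[s_t-E(s_t\mid\mathcal{F}_t^m)]^2=O(\rho^m)$, and then invokes Billingsley's (1968) Theorem 21.1 for dependent sequences, after noting exactly as you do that $E(s_ts_{t+k})=0$ for $|k|\ge2$ so that the limiting variance is $C^\top\Omega C$. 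Your second, sketched route is essentially this same argument. Your first route instead exploits the lag-two property $E[\xi_t\mid\mathcal{F}_{t-2}]=0$ to build an \emph{exact} one-step martingale approximation: with $g_t=E[\xi_{t+1}\mid\mathcal{F}_t]$ (which here is even explicit, since given $\mathcal{F}_t$ the score $\xi_{t+1}$ depends only on $Y_{t-1},\varepsilon_t,\varepsilon_{t+1}$, so $g_t$ is a fixed measurable function of $(Y_{t-1},\varepsilon_t)$ and $\{h_t\}$ inherits stationarity and ergodicity from Theorem 1), the sequence $h_t=\xi_t-g_{t-1}+g_t$ is a stationary ergodic martingale difference with $E[h_th_t^\top]=\Gamma_0+\Gamma_1+\Gamma_{-1}=\Omega$ (your expansion is right, and this matches the paper's $\Omega$), the telescoping remainder $(g_n-g_0)/\sqrt n$ is negligible by Chebyshev, and the stationary-ergodic martingale CLT with Cram\'er--Wold finishes the proof using only Lemma 2(a) for square-integrability. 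What each approach buys: yours avoids the truncation bounds (the analogues of (\ref{4.5})--(\ref{add3})) entirely and is shorter and more elementary, being tailored to the finite-lag conditional-mean-zero structure of this model; the paper's NED machinery is heavier but its intermediate estimates are reused elsewhere (e.g.\ in the proof of Theorem 3), and it would extend more directly to scores whose conditional-mean-zero lag is not finite. Both yield the same limit $N(0,\Omega)$.
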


\begin{proof}  Let $C=(c_1, c_2, c_3, c_4)^\top$ be any constant vector with $C^\top C\not=0$ and define
\begin{eqnarray*}
S_n&\equiv&\frac{C^\top}{\sqrt n}\sum_{t=1}^n\frac{\partial l_t(\theta_0)}{\partial\theta}\\
&=&\frac{1}{\sqrt n}\sum_{t=1}^n\left[
\frac{c_1\xi_{1t}}{\sqrt{\sigma^2_0(1+b_0^2Y_{t-2}^2)}}+\frac{c_2Y_{t-2}\xi_{1t}}{\sqrt{\sigma^2_0(1+b_0^2Y_{t-2}^2)}}-
\frac{c_3\xi_{2t}}{2\sigma^2_0}-\frac{c_4Y_{t-2}^{2}\xi_{2t}}{2(1+b_0^2Y_{t-2}^2)}\right]\\
&\equiv&\frac{1}{\sqrt n}\sum_{t=1}^ns_t,
\end{eqnarray*}
where
\[\xi_{1t}=\frac{\varepsilon_t+b_0Y_{t-2}\varepsilon_{t-1}}{\sqrt{\sigma^2_0(1+b_0^2Y_{t-2}^2)}},~~~\xi_{2t}=1-
\frac{(\varepsilon_t+b_0Y_{t-2}\varepsilon_{t-1})^2}{\sigma^2_0(1+b_0^2Y_{t-2}^2)}.\]
Since $E(s_ts_{t+k})=0$ if $|k|\ge2$, we have
$$\sigma_n^2\equiv E\left(\frac{1}{\sqrt n}\sum_{t=1}^ns_t\right)^2\to Es_t^2+2Es_ts_{t-1}=C^\top\Omega C,$$
as $n\to\infty$. Next we show that $\{s_t\}$ is $L^2(\nu)$-near-epoch dependent series, that is,
\begin{equation}\label{4.4}
E[s_t-E(s_t|{\cal F}_t^m)]^2=O(m^{-\nu}),
\end{equation}
for any $\nu>2$ and large $m$, where ${\cal F}_t^m=\sigma\{\varepsilon_t,\cdots,\varepsilon_{t-m}\}$. Put
$$Y_{m,t}=\mu_0+\varepsilon_t+\sum_{1\le i\le m/2-1}\prod_{r=0}^{i-1}(\phi_0+b_0\varepsilon_{t-2r-1})(\mu_0+\varepsilon_{t-2i}).$$
Then $Y_{m,t}\in{\cal F}_t^m$.  From the proof of Lemma 1, there exists a $\delta\in(0,1)$
such that $E|\phi_0+b_0\varepsilon_t|^\delta<1$. Thus, by Theorem 1, we have
\begin{align}
E|Y_t-Y_{m,t}|^{\delta}&=E\left|\sum_{i\ge m/2}\prod_{r=0}^{i-1}(\phi_0+b_0\varepsilon_{t-2r-1})(\mu_0+\varepsilon_{t-2i})\right|^{\delta}\nonumber\\
&\le\sum_{i\ge m/2}E\left|\prod_{r=0}^{i-1}(\phi_0+b_0\varepsilon_{t-2r-1})(\mu_0+\varepsilon_{t-2i})\right|^{\delta}\nonumber\\
&=\sum_{i\ge m/2}\prod_{r=0}^{i-1}E|\phi_0+b_0\varepsilon_{t-2r-1}|^{\delta}E|\mu_0+\varepsilon_{t-2i}|^{\delta}\nonumber\\
&=O\left(\sum_{i\ge m/2}(E|\phi_0+b_0\epsilon_1|^{\delta})^{i-1}\right)\nonumber\\
&=O(\rho^{m}),\label{4.5}
\end{align}
 where $\rho\in (0,1)$.  It follows from (\ref{4.5}) that
\begin{align}
&E\left|\frac{Y_{t-2}}{{1+b_{0}^{2} Y_{t-2}^{2}}}-\frac{Y_{m,t-2}}{{1+b_{0}^{2} Y_{m,t-2}^{2}}}\right|\nonumber\\
&~~~=\frac{2}{|b_0|}E\left|\frac{b_0Y_{t-2}}{{2(1+b_{0}^{2} Y_{t-2}^{2})}}-\frac{b_0Y_{m,t-2}}{{2(1+b_{0}^{2} Y_{m,t-2}^{2})}}\right|\nonumber\\
&~~~\le\frac{2}{|b_0|}E\left|\frac{b_0Y_{t-2}}{{2(1+b_{0}^{2} Y_{t-2}^{2})}}-\frac{b_0Y_{m,t-2}}{{2(1+b_{0}^{2} Y_{m,t-2}^{2})}}\right|^{\delta}\nonumber\\
&~~~\le\frac2{|b_0|}\left\{E\left|\frac{b_0Y_{t-2}}{2(1+b_0^2Y_{t-2}^2)}-\frac{b_0Y_{m,t-2}}{2(1+b_0^2Y_{t-2}^{2})}\right|^{\delta}
+E\left|\frac{b_0Y_{m,t-2}}{2(1+b_0^2Y_{t-2}^2)}-\frac{b_0Y_{m,t-2}}{2(1+b_0^2Y_{m,t-2}^2)}\right|^{\delta}\right\}\nonumber\\
&~~~\le\frac{4|b_0|^{\delta}}{|b_0|}E|Y_{t-2}-Y_{m,t-2}|^{\delta}\nonumber\\
&~~~=O(\rho^{m}),\label{add1}
\end{align}
 which implies that
\begin{align}
&E\left|\frac{Y_{t-2}}{{1+b_0^2Y_{t-2}^2}}-E\left(\frac{Y_{t-2}}{{1+b_0^2Y_{t-2}^2}}\Big|{\cal F}_{t}^{m}\right)\right|\nonumber\\
&~~~\le E\left|\frac{Y_{t-2}}{{1+b_0^2 Y_{t-2}^2}}-\frac{Y_{m,t-2}}{{1+b_0^2Y_{m,t-2}^2}}\right|
+E\left[E\left(\left|\frac{Y_{t-2}}{{1+b_{0}^2Y_{t-2}^2}}-\frac{Y_{m,t-2}}{{1+b_0^2Y_{m,t-2}^{2}}}\right|\Big|{\cal F}_{t}^{m}\right)\right]\nonumber\\
&~~~=2E\left|\frac{Y_{t-2}}{{1+b_0^2Y_{t-2}^2}}-\frac{Y_{m,t-2}}{{1+b_0^2Y_{m,t-2}^2}}\right|\nonumber\\
&~~~=O(\rho^{m}).\label{add2}
\end{align}
Similar to (10), we can show hat
\begin{equation}\label{add3}
E\left|\frac{Y_{t-2}^{2}}{{1+b_{0}^{2} Y_{t-2}^{2}}}-E\left(\frac{Y_{t-2}^{2}}{{1+b_{0}^{2} Y_{t-2}^{2}}}\Big|{\cal F}_{t}^{m}\right)\right|=O(\rho^{m})
\end{equation}
for some $\delta\in(0,1)$.  Furthermore,  since $\varepsilon_{t}$ and $\varepsilon_{t-1}$  are independent of $Y_{t-2}$ and
$Y_{t-2}^{2}/(1+b_{0}^{2} Y_{t-2}^{2})$ is bounded, it follows  from
(\ref{add2}) and (\ref{add3}) that
\begin{align*}
&E\left|\frac{Y_{t-2}\xi_{1t}}{\sqrt{1+b_0^2Y_{t-2}^2}}-E\left(\frac{Y_{t-2}\xi_{1t}}{\sqrt{1+b_0^2Y_{t-2}^2}}\Big|{\cal F}_t^m\right)\right|^2\\
&\le2E\varepsilon_{t}^2E\left|\frac{Y_{t-2}}{1+b_0^2Y_{t-2}^2}-E\left(\frac{Y_{t-2}}{1+b_0^2Y_{t-2}^2}\Big|{\cal F}_t^m\right)\right|^2
+2b_0^2E\varepsilon_{t-1}^2E\left|\frac{Y_{t-2}^2}{1+b_0^2Y_{t-2}^2}-E\left(\frac{Y_{t-2}^2}{1+b_0^2 Y_{t-2}^2}\Big|{\cal F}_t^m\right)\right|^2\\
&=O\left(E\left|\frac{Y_{t-2}}{1+b_0^2Y_{t-2}^2}-E\left(\frac{Y_{t-2}}{1+b_0^2Y_{t-2}^2}\Big|{\cal F}_t^m\right)\right|\right)
+O\left(E\left|\frac{Y_{t-2}^2}{1+b_0^2Y_{t-2}^2}-E\left(\frac{Y_{t-2}^2}{1+b_0^2Y_{t-2}^2}\Big|{\cal F}_t^m\right)\right|\right)\\
&=O(\rho^m).
\end{align*}
Similar inequalities hold for other terms in $s_{t}$ and hence \eqref{4.4} holds. Therefore we conclude that $S_{n}\overset{d}{\to} N(0,C^\top\Omega C)$ by
Theorem 21.1 in Billingsley (1968). Furthermore, by
the Cram\'{e}r-Wold device, we complete the proof. \end{proof}

\begin{proof}[Proof of  Theorem 2] Part (a) follows  from Theorem 1(a) in Ling and McAleer (2010) and Lemma 1 (Assumption 2(i) in that paper
 automatically holds since we only need one initial value).  First, part (a) of this theorem implies that $\hat{\theta}_n$
 converges a.s.  to $\theta_0$.  Second,
$\dfrac{1}{n}\sum\limits_{t=1}^n\dfrac{\partial^2\ell_t(\theta)}{\partial\theta\partial\theta^\top}$
exists and is continuous in $\Theta$. Third, it follows from Lemma 2(b)-(c) that
$\dfrac{1}{n}\sum\limits_{t=1}^n\dfrac{\partial^2\ell_t(\hat{\theta}_n)}{\partial\theta\partial\theta^\top}$ converges to $-\Sigma$ in probability.
Fourth, by Lemma 3, we have $\dfrac{1}{\sqrt n}\sum\limits_{t=1}^n\dfrac{\partial\ell_t(\theta_0)}{\partial\theta}\overset{d}{\to}N(0, \Omega)$.
Thus,  all conditions in Theorem 4.1.3 in Amemiya (1985)  hold, i.e.,
$\sqrt{n}(\hat{\theta}_n-\theta_0)\overset{d}{\to}N(0,\Sigma^{-1}\Omega\Sigma^{-1})$.  \end{proof}

\begin{proof}[Proof of Theorem 3] Note that (\ref{add1}) follows directly from (\ref{4.5}) without the involved derivations. Hence, the theorem can be shown by
repeating Lemmas 1--3  and using the same arguments in deriving (2.2) in Self and Liang (1987).
\end{proof}

{\bf Acknowledgments.~}
Ling's research was supported by the Hong Kong Research Grants Council  (Grant HKUST641912, 603413 and FSGRF12SC12).
Peng's research was supported by  NSF grant DMS-1005336 and Simons Foundation.
Zhu's research was supported by National Natural Science Foundation of China (11371168, 11271155),
Specialized Research Fund for the Doctoral Program of Higher Education (20110061110003),
Science and Technology Developing Plan of Jilin Province (20130522102JH) and Scientific Research Foundation for the Returned Overseas Chinese Scholars, State Education Ministry.

{\small
}
\end{document}